\theoremstyle{plain}
\newtheorem{theorem}{Theorem}[section]
\newtheorem{conjecture}[theorem]{Conjecture}
\newtheorem{proposition}[theorem]{Proposition}
\newtheorem{lemma}[theorem]{Lemma}
\newtheorem{notation}[theorem]{Notation}
\theoremstyle{definition}
\newtheorem{definition}[theorem]{Definition}
\newtheorem{remark}[theorem]{Remark}
\newcommand{\sheaf}[1]{\mathscr{#1}}
\newcommand{\PP}{\sheaf{P}}
\newcommand{\Z}{\mathbb Z}
\newcommand{\C}{\mathbb C}
\renewcommand{\P}{\mathbb P}
\newcommand{\Q}{\mathbb Q}
\newcommand{\pullback}{^{*}}
\newcommand{\id}{\mathrm{id}}
\newcommand{\Trans}{{T}}
\begin{document}
\title[The transcendental lattice of the sextic Fermat surface]{The transcendental lattice of the\\ sextic Fermat surface}

\author[Auel]{Asher Auel$^{1}$}
\address{Asher Auel, Courant Institute of Mathematical Sciences\\
New York University\\
251 Mercer Street\\
New York, NY 10012, USA}
\email{auel@cims.nyu.edu}

\thanks{$^1$ Supported by Mathematical Sciences Postdoctoral Research
Fellowship grant DMS-0903039 of the NSF (United Stated National
Science Foundation).}

\author[B\"ohning]{Christian B\"ohning$^2$}
\address{Christian B\"ohning, Fachbereich Mathematik der Universit\"at Hamburg\\
Bundesstra\ss e 55\\
20146 Hamburg, Germany}
\email{christian.boehning@math.uni-hamburg.de}

\thanks{$^2$ Supported by Heisenberg-Stipendium BO 3699/1-1 of the DFG (German Research Foundation).}

\author[Bothmer]{Hans-Christian Graf von Bothmer$^3$}
\address{Hans-Christian Graf von Bothmer, Fachbereich Mathematik der Universit\"at Hamburg\\
Bundesstra\ss e 55\\
20146 Hamburg, Germany}
\email{hans.christian.v.bothmer@uni-hamburg.de}

\thanks{$^3$ Supported by the RTG 1670 of the DFG (German Research Foundation).}

%\author[Auel, B\"ohning, Graf v.\ Bothmer]{Asher Auel, Christian B\"ohning, and Hans-Christian Graf von Bothmer}

% \subjclass[2010]{11E08, 11E20, 11E88, 14D06, 14F22, 14L35, 15A66, 16H05}

\begin{abstract} 
We prove that the integral polarized Hodge structure on the
transcendental lattice of a sextic Fermat surface is decomposable.
This disproves a conjecture of Kulikov related to a Hodge theoretic
approach to proving the irrationality of the very general cubic
fourfold.
\end{abstract}

\maketitle

\section{Introduction}
\label{sIntroduction}

Proving the irrationality of a very general cubic fourfold $X \subset
\P^5$ over the complex numbers is a well-known problem in algebraic
geometry.  At present, not a single cubic fourfold is provably
irrational.  However, families of rational cubic fourfolds are
described by Fano \cite{fano}, Tregub \cite{tregub},
\cite{tregub:new}, and Beauville--Donagi \cite{beauville-donagi}.
% In particular, \linedef{pfaffian cubic fourfolds}, defined by
% pfaffians of skew-symmetric $6\times 6$ matrices of linear forms,
% are rational.  The closure of their locus forms a divisor
% $\mathcal{C}_{14}$ in the moduli space ${\mathcal C}$ of cubic
% fourfolds.
Hassett \cite{hassett:special-cubics} identifies, via lattice
theory, a countably infinite number of subvarieties, of codimension 2
in the moduli space, consisting of rational cubic fourfolds.  
% divisors ${\mathcal C}_d$ in the moduli space ${\mathcal C}$ of
% cubic fourfolds.  In particular, ${\mathcal C}_{14}$ is the closure
% of the locus of pfaffian cubic fourfolds and ${\mathcal C}_8$ is the
% locus of cubic fourfolds containing a plane.  Hassett
% \cite{hassett:rational_cubic} identifies countably many divisors of
% ${\mathcal C}_8$ consisting of rational cubic fourfolds.
So far, all known rational cubic fourfolds lie on two divisors of the
moduli space, corresponding to the existence of a plane or a quartic
scroll marking.  Even the construction of additional classes of
rational cubic fourfolds is an open problem.

\medskip

Recently, Kulikov~\cite{kulikov:cubic} initiated a conjectural
approach to the irrationality problem for cubic fourfolds.  The
strategy is modeled on that of Clemens and Griffiths
\cite{clemens_griffiths} for cubic threefolds, with the role of the
intermediate Jacobian played by the integral polarized Hodge structure
$\Trans_X$ on the transcendental part of the middle cohomology
$H^4(X,\Z)$.  Assuming the existence of a birational map $r : \P^4
\dasharrow X$, by Hironaka's resolution of singularities, we can
resolve $r$ to a birational morphism $f : X' \to X$ by a sequence
$$
X' = X_n \to X_{n-1} \to \dotsm \to X_0 = \P^4 \dasharrow X
$$
of blow-ups $X_i \to X_{i-1}$ along points, smooth curves, or smooth
surfaces.  Blow-ups along points, curves, and surfaces of $p_g=0$ do
not contribute to the transcendental lattice, hence there is a
decomposition of polarized Hodge structures
$$
\Trans_{X'} = \textstyle\bigoplus_j \Trans_{S_j}(-1)
$$ 
where the sum is taken over all surfaces $S_j$ of $p_g \geq 1$ that
are the centers of blow-ups in the resolution.  Here, for $S$ a smooth
projective surface, $T_S$ denotes the transcendental part of the
middle cohomology $H^2(S,\Z)$.  On the other hand, there is a
decomposition
$$
\Trans_{X'} = f\pullback \Trans_X \oplus (f\pullback \Trans_X)^{\perp}.
$$
Kulikov proves \cite[Lemmas~2,3]{kulikov:cubic} that for $X$ very
general, comparing these two decompositions yields an index $j_0$ such
that $\Trans_{S_{j_0}} = f\pullback\Trans_{X}(1) \oplus \Trans'$ for
some \emph{nontrivial} polarized Hodge substructure $\Trans' \subset
(f\pullback \Trans_X)^{\perp}$.  The nontriviality of $\Trans'$
follows from standard estimates on the 2nd Betti number of a minimal
model of $S_{j_0}$.  Then the irrationality of the very general cubic
fourfold would follow from the following conjecture.

\begin{conjecture}[Nondecomposability Conjecture {\cite[p.~59]{kulikov:cubic}}]
\label{cKulikovOld} 
Let $S$ be a smooth projective surface over the complex numbers.  Then
the integral polarized Hodge structure $\Trans_S$ on the
transcendental part of $H^2(S,\Z)$ is indecomposable.
\end{conjecture}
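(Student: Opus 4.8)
Since the abstract announces that this conjecture is \emph{false}, the task is really to \emph{disprove} it, and the title points to the surface that should do the job: the smooth sextic Fermat surface $S \subset \P^3$ cut out by $x_0^6 + x_1^6 + x_2^6 + x_3^6 = 0$. My plan is to exhibit an explicit orthogonal splitting of $\Trans_S$ into two nonzero integral polarized Hodge substructures, exploiting the very large automorphism group of $S$: the torus $G = (\mu_6)^4/\mu_6 \isom (\mu_6)^3$ acting by coordinate scaling, together with the symmetric group $S_4$ permuting the coordinates.

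First I would write down $\Trans_S$ via the classical eigenspace decomposition of Fermat cohomology. The $G$-action diagonalizes $H^2(S,\C)$ into one-dimensional character spaces $V(\mathbf a)$ indexed by tuples $\mathbf a = (a_0,a_1,a_2,a_3)$ with $a_i \in \{1,\dots,5\}$ and $\sum_i a_i \equiv 0 \pmod 6$, the Hodge type of $V(\mathbf a)$ being read off from $\tfrac{1}{6}\sum_i a_i \in \{1,2,3\}$ (values $1,3$ giving $H^{2,0},H^{0,2}$ and value $2$ giving $H^{1,1}$). The only Galois-fixed character is the one fixed by $\mathbf a \mapsto -\mathbf a$, so every Galois pair $\{\mathbf a,-\mathbf a\}$ of sum-$12$ characters spans a rational sub-Hodge-structure that is pure of type $(1,1)$, hence algebraic by the Lefschetz theorem and contained in $\Pic(S)\tensor\Q$. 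Consequently $\Trans_S$ is spanned by the sum-$6$ and sum-$18$ characters: it has rank $20$, is purely of type $\{(2,0),(0,2)\}$ with $h^{2,0}=10$, and carries complex multiplication by $\Q(\zeta_6)=\Q(\sqrt{-3})$. The ten sum-$6$ characters split into two $S_4$-orbits, four of multiset type $\{3,1,1,1\}$ and six of type $\{2,2,1,1\}$.

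This suggests the decomposition. Let $T_A$ be the span of the $V(\mathbf a)$ whose character multiset is $\{3,1,1,1\}$ or its negative $\{3,5,5,5\}$ (rank $8$, $h^{2,0}=4$), and $T_B$ the span of those with multiset $\{2,2,1,1\}$ or $\{4,4,5,5\}$ (rank $12$, $h^{2,0}=6$). Both are stable under $G$, under $S_4$, and under $\mathbf a \mapsto -\mathbf a$, hence are defined over $\Q$ and are polarized Hodge substructures. Since $G$ preserves cup product, $V(\mathbf a)\perp V(\mathbf b)$ unless $\mathbf b=-\mathbf a$, and the negative of a block-$A$ character is again in block $A$ (likewise for $B$), so $T_A \perp T_B$. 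Thus $\Trans_S\tensor\Q = (T_A\tensor\Q)\oplus(T_B\tensor\Q)$ is a nontrivial orthogonal decomposition of the \emph{rational} polarized Hodge structure, with both summands nonzero of type $\{(2,0),(0,2)\}$.

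It remains to upgrade this to an \emph{integral} splitting $\Trans_S = (\Trans_S\cap T_A)\oplus(\Trans_S\cap T_B)$, and this is where the real work lies. Over $\Z$ the two saturated sublattices can a priori be glued through a nonzero finite subgroup of the product of their discriminant groups, and ruling this out requires controlling the integral intersection form, not merely the rational Hodge structure. I would compute the discriminant forms of the two blocks explicitly from the known intersection theory of the Fermat surface (equivalently, via Jacobi sums and the $\Z[\zeta_6]$-module structure) and show that they admit no common glue. The expected mechanism is an arithmetic separation: $T_A$ comes from the order-$2$ scaling directions ($a_i=3$) and $T_B$ from the order-$3$ directions ($a_i=2$), so their discriminant forms should be supported at the coprime primes $2$ and $3$ and hence cannot be glued, yielding $\Trans_S = T_A\oplus T_B$ integrally and contradicting the conjecture. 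Verifying this prime separation at the level of the integral lattice is the main obstacle; everything upstream of it reduces to a finite character computation.
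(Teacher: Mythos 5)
Your overall route is the paper's own: the same surface, the same character-eigenspace description of $H^2$ under the scaling group, the same identification of $\Trans_S$ with the sum-$6$ and sum-$18$ characters (rank $20$, purely of type $(2,0)+(0,2)$ since the Picard number is maximal), and an orthogonal splitting along $\mathfrak{S}_4$-orbits of characters. Your two-block decomposition $T_A\oplus T_B$ is in fact a coarsening of the paper's four-block decomposition $L_{(1,1,1,3)}\oplus L_{(1,1,2,2)}\oplus L_{(1,2,1,2)}\oplus L_{(1,2,2,1)}$, where the $\{2,2,1,1\}$-orbit is further separated into three rank-$4$ pieces according to which pair of coordinates carries the $2$'s. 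Everything up to and including the \emph{rational} orthogonal decomposition is sound.

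The gap is in the final step, which you correctly identify as the decisive one but for which your proposed mechanism fails. The hoped-for ``arithmetic separation'' --- discriminant forms of the two blocks supported at the coprime primes $2$ and $3$, hence no possible glue --- is not what happens: the paper's explicit Gram matrices give $\det Q_{(1,1,1,3)}=2^{16}3^{12}$ for the rank-$8$ block and $\det Q_{(1,1,2,2)}=2^{8}3^{6}$ for each rank-$4$ block, so \emph{every} block has a discriminant group with nontrivial $2$-part and nontrivial $3$-part, and gluing at either prime cannot be ruled out on these grounds. Moreover, comparing discriminant forms presupposes knowing the discriminant of $\Trans_S$ (equivalently of the Picard lattice), which is not available in the literature; the paper obtains it ($2^{40}3^{30}$) only as a \emph{consequence} of the computation. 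What the paper actually does is construct explicit integral bases of sublattices $L'_\beta$ by interpolation in $\Z[u_0,\dots,u_3]/I_6$, compute the intersection forms via Looijenga's formula together with the determination of the constants $\alpha_P$, and then check that the resulting $20$ basis vectors of $\bigoplus_\beta L'_\beta$ remain linearly independent after reduction mod $2$ and mod $3$; since the discriminant of this direct sum is divisible only by $2$ and $3$, saturation in $H^2$ follows and hence equality with $\Trans_S$. Some explicit integral computation of this kind appears unavoidable, and the purely discriminant-theoretic shortcut you sketch does not close the argument.
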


For curves, the integral polarized Hodge structure on $H^1$ is indeed indecomposable because of Riemann's theorem describing the theta divisor of a Jacobian in geometric terms and showing that it is irreducible. Hence the Jacobian is indecomposable as a polarized abelian variety and this is what is used substantially in the proof of the irrationality of cubic threefolds by Clemens and Griffiths \cite{clemens_griffiths}. An essential point in Conjecture \ref{cKulikovOld} is the indecomposability over $\mathbb{Z}$. In fact, counterexamples over $\mathbb{Q}$ abound; note also that Jacobians of curves are decomposable if one considers them within the category of abelian varieties up to isogeny.

\medskip

In this paper we prove that Conjecture \ref{cKulikovOld} is false integrally, too:

\begin{theorem}
\label{tMain}
Let $S \subset \P^3$ be the sextic Fermat surface.  Then the integral
polarized Hodge structure $\Trans_S$ on the transcendental part of
$H^2(S,\Z)$ is decomposable.
\end{theorem}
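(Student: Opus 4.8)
The plan is to make the eigenspace decomposition of $H^2(S,\C)$ under the large automorphism group of the Fermat surface completely explicit, identify $\Trans_S$ inside it, and then read off an integral orthogonal splitting. Write $G = (\mu_6)^4/\Delta$ for the quotient of the diagonal torus action (of order $6^3$), so that the primitive cohomology decomposes into one-dimensional eigenspaces $H^2(S,\C)_{\mathrm{prim}} = \bigoplus_{\mathbf{a}} V(\mathbf{a})$ indexed by characters $\mathbf{a} = (a_0,a_1,a_2,a_3)$ with $a_i \in \{1,\dots,5\}$ and $a_0+a_1+a_2+a_3 \equiv 0 \pmod 6$. The Hodge type of $V(\mathbf{a})$ is governed by $|\mathbf{a}| = \sum_i a_i \in \{6,12,18\}$: $|\mathbf{a}|=6$ gives type $(2,0)$, $|\mathbf{a}|=12$ gives $(1,1)$, and $|\mathbf{a}|=18$ gives $(0,2)$. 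First I would check that the transcendental part is exactly $\Trans_S \tensor \C = \bigoplus_{|\mathbf{a}| \in \{6,18\}} V(\mathbf{a})$: the Galois group $(\Z/6)\mult = \{\pm 1\}$ acts by $\mathbf{a} \mapsto -\mathbf{a}$, which interchanges $|\mathbf{a}|=6$ and $|\mathbf{a}|=18$ and fixes the $(1,1)$-locus, so every $(1,1)$-eigenspace lies in a rational Hodge substructure of pure type $(1,1)$, hence in $\mathrm{NS}(S)_\Q$. This identifies $\Trans_S$ as a rank-$20$ lattice with $h^{2,0}=h^{0,2}=10$ and $h^{1,1}=0$; by the Hodge index theorem the intersection form on the $(2,0)\oplus(0,2)$-part is definite, so $\Trans_S$ is an even definite lattice of rank $20$.

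Next I would exploit $G$-invariance of the cup product: since $G$ acts on $V(\mathbf{a})$ through the character $\chi_{\mathbf{a}}$, the pairing $V(\mathbf{a}) \tensor V(\mathbf{b}) \to \C$ vanishes unless $\mathbf{b} = -\mathbf{a}$. Therefore the Galois orbits $W_{\mathbf{a}} = V(\mathbf{a}) \oplus V(-\mathbf{a})$ are mutually orthogonal, and rationally $\Trans_S \tensor \Q = \bigoplus_{i=1}^{10} W_i$ is an orthogonal direct sum of ten rank-$2$ polarized Hodge substructures, each with complex multiplication by $\Q(\zeta_6) = \Q(\sqrt{-3})$. The ten $(2,0)$-characters are the four permutations of $(3,1,1,1)$ and the six permutations of $(2,2,1,1)$; these two families are stable under $\mathbf{a}\mapsto-\mathbf{a}$ and are distinguished by their reduction modulo $2$ (all entries odd versus two even entries). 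This singles out a natural candidate splitting $\Trans_S \tensor \Q = \Trans_A \oplus \Trans_B$, into polarized Hodge substructures with $\rk \Trans_A = 8$ and $\rk \Trans_B = 12$.

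The heart of the matter is to promote this rational, polarization-compatible decomposition to an integral one, and here the only obstruction is the glue: setting $\Trans_A = \Trans_S \cap \bigl(\bigoplus_{i\in A} W_i\bigr)$ and likewise $\Trans_B$, the lattice $\Trans_S$ sits between $\Trans_A \perp \Trans_B$ and its dual, and splits as $\Trans_A \perp \Trans_B$ precisely when no nonzero glue connects the two blocks. To control this I would compute $\Trans_S$ explicitly as an integral lattice --- for instance from a Pham-type basis of the vanishing homology of the affine Milnor fibre of $x_0^6 + \dots + x_3^6$, whose intersection numbers are explicit cyclotomic expressions in $\zeta_6$ --- and determine the discriminant groups $\Trans_A\dual/\Trans_A$ and $\Trans_B\dual/\Trans_B$. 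The splitting follows if these discriminant forms share no common glue; the cleanest possible outcome, entirely consistent with the $2$-versus-$3$ nature of the two families, is that their orders are coprime, in which case any isotropic glue subgroup projecting injectively to each factor is forced to vanish and $\Trans_S = \Trans_A \perp \Trans_B$ decomposes.

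I expect the main obstacle to be precisely this integral book-keeping at the level of the lattice: the rational decomposition and the orthogonality of the $W_i$ are formal consequences of the group action, but determining $\Trans_S$ inside $\bigoplus_i W_i$ --- equivalently, the gluing between the CM pieces --- requires the explicit (and sizeable) intersection data and a careful analysis at the primes $2$ and $3$. Should coprimality of discriminants fail, one must instead locate the actual block decomposition of the glue group directly; but the orthogonal, maximally CM-decomposed rational structure guarantees that some nontrivial integral splitting survives, which is all that Theorem \ref{tMain} requires.
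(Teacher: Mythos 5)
Your setup coincides with the paper's: the eigenspace decomposition under $G=\mu_6^4/\mu_6$, the identification $\Trans_S\otimes\C = H^{2,0}\oplus H^{0,2}$ (maximal Picard rank, since $\mathrm{Gal}(\Q(\zeta_6)/\Q)=\{\pm1\}$ fixes the $(1,1)$-characters), the orthogonality of the character blocks forced by $G$-invariance of the cup product, and the candidate splitting by character type --- the four permutations of $(1,1,1,3)$ versus the six of $(1,1,2,2)$. (The paper in fact refines your rank-$12$ block into three rank-$4$ blocks, grouping the permutations of $(1,1,2,2)$ by the induced partition of the coordinates into pairs, so it obtains a four-fold splitting $8+4+4+4$.) All of this is correct and is exactly where the paper starts.

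The gap is in the final step, and it is the step that carries the entire content of the theorem. Your primary mechanism --- coprimality of the discriminants of the two blocks --- provably fails: the paper computes $\det Q_{(1,1,1,3)}=2^{16}3^{12}$ and $\det Q_{(1,1,2,2)}=2^{8}3^{6}$, so every block has discriminant divisible by both $2$ and $3$ and the glue group a priori has components at both primes connecting the blocks. Your fallback assertion, that the orthogonal, maximally CM-decomposed rational structure ``guarantees that some nontrivial integral splitting survives,'' is false as a general principle: rational orthogonal decompositions of transcendental lattices are abundant (the introduction notes that counterexamples to Kulikov's conjecture over $\Q$ abound) and they generically do \emph{not} descend to $\Z$; if they did, the conjecture would never have been plausible. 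The paper closes this gap only by explicit computation: it interpolates explicit integral vectors $\varphi_j$, $\psi_j$ supported on each character block, computes the Gram matrices via the constants $\alpha_P$ ($\alpha_{P_{(1,1,1,3)}}=1/108$, $\alpha_{P_{(1,1,2,2)}}=1/72$), and then verifies that the union of these bases remains linearly independent modulo $2$ and modulo $3$ --- the only primes dividing the discriminant --- which shows the orthogonal direct sum is saturated in the ambient lattice and therefore equals $\Trans_S$. Without this (or an equivalent) integral verification, your argument establishes only the rational decomposition, which is not what the theorem claims.
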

\noindent
After recalling some general theory in Section \ref{sFermatStr}, we prove this in Section \ref{sGaloisSextic}.

\medskip

Admittedly, the sextic Fermat surface $S$, defined in $\P^3$ by
$$
x_0^6+x_1^6+x_2^6 + x_3^6=0
$$
is quite special.  For one, it
has maximal Picard rank $\rho(S) = h^{1,1}(S) = 86$, a fact known to
Beauville, cf.\ \cite[Rem.~3.3(ii)]{shioda:Picard_number} or
\cite[Ex.~3]{schutt:quintic}. The
rank of $\Trans_S$ is 20.

In our analysis, we make use of the description of the integral Hodge
structure of Fermat varieties as a module over the group ring of the
automorphism group in the formulation of
Looijenga~\cite[\S2]{looijenga:Fermat}, which in turn draws on many
previous sources \cite{griffiths:periods}, \cite{shioda:Hodge_Fermat},
\cite{ran:fermat}, \cite{pham:picard}.

\medskip

After a discussion of the results of this paper with the second author
at an Oberwolfach workshop in May 2013, V.~Kulikov suggested that his
conjecture could be modified to the effect that 
surfaces with decomposable integral polarized Hodge structure on the transcendental lattice are rigid with this property, and that this would still imply irrationality of the very general cubic fourfold.
% We would like to thank him for allowing us to present his revised
% conjecture here.
% very much for the interesting and useful discussion.
%
% \begin{conjecture}
% \label{cKulikovNew}
% Surfaces for which Conjecture \ref{cKulikovOld} fails are rigid with
% this property.  In other words, there does not exist a nonisotrivial
% smooth projective family $\mathscr{S} \to B$ of surfaces over a
% positive dimensional base $B$ such that for every $b \in B(\C)$ the
% fiber $\mathscr{S}_b$ has a decomposable integral polarized Hodge
% structure on the transcendental part of $H^2(\mathscr{S}_b,\Z)$.
% \end{conjecture}
%
In Section \ref{sNewIrrationality}, we work out the details of this.  

% , whereas
% one would get only countably many surfaces $S$ for which $\Trans_S$
% decomposes.
% if Conjecture \ref{cKulikovNew} was true.

One may also wonder if surfaces $S$ for which
Conjecture~\ref{cKulikovOld} fails are always defined over
$\overline{\mathbb{Q}}$.

Recall that the sextic Fermat surface has maximal Picard number. It is
not clear to us whether there exists nonisotrivial positive
dimensional families of surfaces with maximal Picard number and $p_g >
1$.

\medskip

Finally, we mention a few other conjectural approaches to prove the
irrationality of the very general cubic fourfold.  A derived
categorical approach due to Kuznetsov \cite{kuznetsov:cubic_fourfolds}
has seen recent activity \cite{addington_thomas}, \cite{BMMS},
\cite{macri_stellari}.  Using the theory of semiorthogonal
decompositions Kuznetsov constructs a triangulated category
$\mathbf{A}_X \subset \mathrm{D}^b(X)$ and conjectures that it encodes
all the information concerning the rationality of $X$.  The
irrationality of the very general cubic fourfold would be a
consequence.  This approach runs into some difficulties due to certain pathologies that semiorthogonal decompositions of derived categories may exhibit, see \cite{bbs:godeaux}, \cite{bbks:barlow}, \cite{bbs:jordan}, but which can possibly be overcome if complemented by new ideas.

There is also a cohomological invariant approach due to
Colliot-Th\'el\`ene.  Many unramified cohomology groups of $X$ vanish
as a consequence of the integral Hodge conjecture
\cite[Thm.~1.1]{colliot_voisin:unramified_cohomology} proved by
Voisin~\cite[Thm.~18]{voisin:aspects} and the triviality of the
Brauer group \cite[Thm.~A.1]{poonen_voloch}.  Nevertheless, by a
result of Merkurjev~\cite[Thm.~2.11]{merkurjev:unramified_cycle_modules}, the vanishing of \emph{all} unramified cohomology
groups arising from cycle modules is controlled by the vanishing of
the Chow group $A_0(X_F)$ of 0-cycles of degree 0 on $X_F = X
\times_{\C} F$ over all field extensions $F/\C$.  Hence the detection
of such a nontrivial 0-cycle on a cubic fourfold over a sufficiently
complicated field $F$ would imply irrationality.

We would like to thank A.~Beauville, F.~Bogomolov, L.~Katzarkov, and
especially V.~Kulikov for discussions and suggestions concerning the
present material.

\section{Integral polarized Hodge structures on Fermat surfaces}
\label{sFermatStr}

Here we describe the integral polarized Hodge structure on the cohomology of Fermat varieties, especially Fermat surfaces, building on and developing further \cite{looijenga:Fermat}. 

\begin{definition}
\label{dPolarizedHodge} \quad \\
\begin{itemize}\setlength{\itemsep}{5pt}
\item[(1)]
An integral polarized Hodge structure (IPHS) of weight $n\in \mathbb{N}$ is a triple $( H_{\Z}, H^{p,q}, Q)$ where
\begin{itemize}\setlength{\itemsep}{5pt}
\item $H_{\Z}$ is a free $\Z$-module of finite rank and $Q : H_{\Z}
\times H_{\Z } \to \Z$ is a nondegenerate (i.e., nondegenerate over
$\Q$) bilinear form with symmetry property $Q(x,y) = (-1)^n
Q(y,x)$ for all $x, y\in H_{\Z}$.
\item
The $H^{p,q}$, $0\le p,q\le n$, are complex linear subspaces in the complexification $H_{\C } = H_{\Z }\otimes_{\Z} \C$ with the property that
\[
H_{\C } = \bigoplus_{p+q=n} H^{p,q}
\]
and such that $H^{p,q} = \overline{H^{q,p}}$, with the conjugation on $H_{\Z} \otimes_{\Z} \C$ being induced by the conjugation on $\C$. 
\item
Extend $Q$ to $H_{\C}$ by linearity. Then we require the orthogonality condition
\[
(x,y) =0 \; \mathrm{if} \; x\in H^{p,q}, \; y\in H^{p', q'} \; \mathrm{with} \; p \neq q' .
\]
Sometimes one also requires the positivity condition
\[
(\sqrt{-1})^{p-q} Q (x, \bar{x}) > 0 \; \mathrm{for} \; 0\neq x \in H^{p,q} .
\]
We choose not to make it part of the abstract notion of integral polarized Hodge structure for definiteness, but this is immaterial for everything that follows: most Hodge structures that occur in this article have this property as they are sub-Hodge structures of geometric Hodge structures on the primitive cohomology of smooth projective varieties.
\end{itemize}
\item[(2)]
The notion of morphism of integral polarized Hodge structures $( H_{\Z , 1}, H_1^{p,q}, Q_1)$ and $( H_{\Z , 2}, H_2^{p,q}, Q_2)$ is the natural one: it is a $\Z$-linear homomorphism $f : H_{\Z , 1} \to H_{\Z , 2}$ which is an isometry, i.e., $Q_2( f(x), f(y)) = Q_1 (x,y)$, and for $f_{\C } = f \otimes \mathrm{id}$ we have $f_{\C } (H^{p,q}_1) \subset H^{p,q}_2$. Such an $f$ is necessarily an embedding since $Q_1$, $Q_2$ are nondegenerate. 
\item[(3)]
There is a natural notion of direct sum of two integral polarized Hodge structures $( H_{\Z , 1}, H_1^{p,q}, Q_1)$ and $( H_{\Z , 2}, H_2^{p,q}, Q_2)$; it is simply given by
\[
(H_{\Z , 1} \oplus H_{\Z , 2}, H^{p,q}_1 \oplus H^{p,q}_2, Q_1 \oplus Q_2) .
\]
An integral polarized Hodge structure is indecomposable if it is not a direct sum of two nontrivial integral polarized Hodge structures.
\end{itemize}
\end{definition}

Let $X$ be an $n$-dimensional smooth projective variety. We assume
that $n=2m$ is even.  Consider the the middle cohomology $H_{\Z } :=
H^n (X, \Z ) /(\mathrm{torsion})$ and its Hodge decomposition $H_{\C }
= H^n (X, C) = \bigoplus_{p+q=n} H^{p,q}$ into
% where $H^{p,q}$ are 
the spaces of harmonic $(p,q)$-forms.  Consider the bilinear form $Q$
defined as the restriction to $H_{\Z}$ of
\[
Q(x,y) = \int_{X} x \wedge y , \qquad\text{for}~ x,y \in H_{\C }.
\]
Then this triple defines an integral polarized Hodge structure; the
positivity condition in Definition \ref{dPolarizedHodge}(1) is not
satisfied, but it holds if we pass to primitive cohomology.

\begin{definition}
\label{dPrimitive}
Let $h \in H^{1,1}\cap H^2 (X, \Z )$ be a polarization class on $X$. 
\begin{enumerate}\setlength{\itemsep}{5pt}
\item
The IPHS on the primitive cohomology of $X$, denoted by  
\[
(H^n_0 (X, \Z ), H^{p,q}_0, Q_0),
\]
is defined as follows: $H^n_0 (X, \Z) \subset H^n (X, \Z
)/(\mathrm{torsion})$ is the sublattice which is orthogonal (with
respect to $Q$) to the middle power of the polarization class $h^m \in H^{m,m}\cap H^n (X, \Z )$ and $H^{p,q}_0 = H^{p,q} \cap (H^n_0 (X, \Z )\otimes_\Z \C )$. Moreover, $Q_0$ is the restriction of $Q$ to $H^n_0 (X, \Z )$. 
\item
We call $A_X = H^{m,m} \cap H^n (X, \Z )$ the algebraic lattice and $T_X = A_X^{\perp}$ the transcendental lattice of $X$. The transcendental IPHS  is 
\[
(T_X, H^{p,q}_T, Q_T)
\]
with  $H^{p,q}_T = H^{p,q} \cap (T_X \otimes \C )$ and $Q_T$ the
restriction of $Q$ to $T_X$.  It is an integral polarized Hodge
substructure of the primitive cohomology.
\end{enumerate}
\end{definition}

\medskip

We will now assume $n=2$ and describe this structure for the Fermat surface $X_d = \{ x_0^d +x_1^d + x_2^d + x_3^d =0 \} \subset \mathbb{P}^3$ of degree $d$ in $\mathbb{P}^3$, taking our point of departure from \cite{looijenga:Fermat}, which we would like to simplify and amplify in several respects. 

Looijenga's computation starts by considering homology. Poincar\'{e} duality gives an isomorphism
\[
P : H_2 (X_d, \Z ) \simeq H^2 (X_d, \Z).
\]

\begin{lemma}
\label{lPD}
If we endow $H_2 (X_d, \Z )$ with the intersection product and $H^2
(X_d, \Z )$ with $Q$, then $P$ is an isomorphism of integral lattices.
\end{lemma}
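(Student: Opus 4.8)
The plan is to show that Poincaré duality respects the two pairings up to sign, and then to argue that any sign discrepancy is actually an equality. Recall that for a compact oriented $2$-manifold of real dimension $4$, the cup product pairing on $H^2(X_d,\Z)$ and the intersection pairing on $H_2(X_d,\Z)$ are intertwined by the Poincaré duality isomorphism $P$: concretely, the intersection number of two middle-dimensional homology classes equals the cup product of their Poincaré duals, evaluated on the fundamental class. Since $Q(x,y)=\int_{X_d} x\wedge y$ computes precisely the evaluation of the cup product $x\smile y$ on the fundamental class (de Rham representing cup product by wedge of forms), the identity $Q(P(a),P(b)) = a\cdot b$ for $a,b\in H_2(X_d,\Z)$ is essentially a restatement of the standard compatibility between cup and intersection products.

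First I would fix orientation conventions so that the fundamental class $[X_d]$ used to define $P$ is the same one used to evaluate $\int_{X_d}$; this is where a sign could sneak in, and getting it right is the only real content. The cleanest route is to recall the general fact (for an oriented closed manifold $M$ of dimension $2k$) that under $P: H_k(M,\Z)\xrightarrow{\sim} H^k(M,\Z)$ the intersection form on homology and the cup product pairing on cohomology agree: $\langle P(a)\smile P(b), [M]\rangle = a\cdot b$. I would cite this as a standard result, e.g.\ from the treatment of intersection theory via Poincaré duality, and then note that $\int_{X_d}x\wedge y = \langle x\smile y, [X_d]\rangle$ by de Rham's theorem identifying the wedge-product pairing with the topological cup product.

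The main obstacle, such as it is, is bookkeeping of signs and conventions rather than any substantive difficulty. In weight $n=2$ the pairing $Q$ is symmetric, and the intersection form on surfaces is symmetric as well, so there is no parity sign to worry about; the only potential issue is an overall orientation sign, which I would resolve by pinning down the complex orientation of $X_d$ as a complex manifold and verifying it is the one compatible with both sides. Once the conventions are aligned, $P$ is by construction a $\Z$-module isomorphism (Poincaré duality holds integrally because $X_d$ is a smooth, hence orientable and simply-connected with torsion-free $H^2$, complex surface), and the pairing identity upgrades it to an isometry of integral lattices, which is exactly the claim.

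Thus the proof reduces to: (i) integral Poincaré duality gives the $\Z$-module isomorphism $P$; (ii) the standard cup--intersection compatibility plus de Rham's theorem gives $Q(P(a),P(b)) = a\cdot b$; and (iii) a one-line orientation check ensures the sign is $+1$. I expect step (iii) to require the most care in exposition, even though it is routine, since it is the point where an unexamined convention could invert the identity.
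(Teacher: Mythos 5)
Your proposal is correct and takes essentially the same approach as the paper, which simply records the standard compatibility of Poincar\'e duality with the cap/cup/intersection pairings as a commutative diagram and concludes; your extra care about the orientation sign is reasonable but the paper treats it as implicit in the usual complex-orientation conventions.
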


\begin{proof} 
We have the following commutative diagram:
$$
\xymatrix@R=18pt{
H_2(X_d,\Z) \otimes H_2(X_d,\Z) \ar[r]^-{Q'}\ar[d]^{P\otimes \id} & \Z \ar@{=}[d]\\
H^2(X_d,\Z) \otimes H_2(X_d,\Z) \ar[r]^-{\cap}\ar[d]^{\id\otimes P} & \Z \ar@{=}[d]\\
H^2(X_d,\Z) \otimes H^2(X_d,\Z) \ar[r]^-{Q} & \Z \\
}
$$
where $\cap$ is the topological cap product, $Q'$ the intersection
product, and $Q$ the bilinear form on cohomology as defined above. The
assertion follows.  By abuse of notation, we will also write $Q$ (and not
$Q'$) for the pairing on homology in the following.
\end{proof} 

Looijenga now works with the primitive homology $H_2^0 (X_d, \Z )$
defined as the orthogonal to $h$ (the embedding hyperplane class from
$\P^3$), viewed as an element of $H_2 (X_d, \Z )$ (so this is
$P^{-1}(h)$, to be precise). Hence Poincar\'{e} duality induces an
isomorphism of lattices
\[
P : H_2^0 (X_d, \Z ) \to H^2_0 (X_d, \Z ) .
\]

\begin{remark}
\label{rFermat}
The Fermat hypersurface $X_d$ is invariant under the action of the group $\bbmu^{4}_d/ \bbmu_d$ where $\bbmu^4_d/\bbmu_d$ acts on $X_d$ via rescaling the coordinates. Therefore $H^0_2 (X_d, \Z )$ is naturally a module over the group algebra 
$ \Z [\bbmu^{4}_d/ \bbmu_d].$
\end{remark}

The following is a consequence of \cite{looijenga:Fermat}, Cor.~2.2
and the computation following Rem.~2.3 on p.~6. 

\begin{proposition}
\label{pLattice}
The lattice $H^0_2 (X_d , \Z )$ is  isomorphic, as a $\Z$-module, to the quotient ring
\[
H^0_2 (X_d, \Z ) \simeq \Z [u_0, u_1, u_2, u_3] / I_d
\]
where $I_d$ is the ideal
\[
I_d = \left( u_0  u_1  u_2  u_3 -1, \frac{u_0^d -1}{u_0 -1}, \dots , \frac{u_3^d -1}{u_3 -1} \right) .
\]
The intersection form is given as follows: abbreviating
\[
u^K := u_0^{k_0} \cdot \ldots \cdot u_3^{k_3}\;\mathrm{for} \; K = (k_0, \dots , k_3) \; \mathrm{and} \; \Pi_I := \prod_{i\in I} u_i \; \mathrm{for} \; I \subset \{ 0, \dots , 3 \},
\]
then $u^K \cdot u^L$ is the coefficient of $1$ in 
\[
-u^{K-L} (1 -u_0) (1-u_1)(1-u_2)(1-u_3)
\]
where we calculate in the group ring 
\[
 \Z [\bbmu^{4}_d/ \bbmu_d] = \Z [u_0, u_1, u_2, u_3] /  ( u_0  u_1  u_2  u_3 -1, u_0^d -1, \dots , u_3^d -1 ) .
\]
%or, more concretely, 
%\begin{gather*}
%u^K \cdot u^L = \left\{ \begin{array}{ll} (-1)^{| I | }  &  u^K = u^L \Pi_I \; \mathrm{some} \; I \\
                                                                      %(-1)^{| I | }  &  \Pi_I u^K = u^L  \; \mathrm{some} \; I \\
                                                                       % 2               &  u^K = u^L \\
                                                                         %0              & \mathrm{otherwise}
                                                                         % \end{array}\right. .
%\end{gather*}
Moreover, the $\Z[\bbmu^4_d/\bbmu_d]$-module structure on
$H^0_2(X_d,\Z)$ induced by rescaling the coordinates coincides with
its presentation as a submodule of $\Z[\bbmu^4_d/\bbmu_d]$.
\end{proposition}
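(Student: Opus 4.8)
The plan is to derive the presentation from the classical description of the homology of the affine Milnor fiber by Pham cycles, which is exactly what underlies Looijenga's computation. First I would pass to the affine smooth hypersurface $F = \{x_0^d + x_1^d + x_2^d + x_3^d = 1\} \subset \C^4$, the Milnor fiber of the singularity $\sum_i x_i^d$. By Pham's theorem $F$ has the homotopy type of the join $\bbmu_d * \bbmu_d * \bbmu_d * \bbmu_d$ of the sets of $d$-th roots of unity, so $H_3(F,\Z)$ is free of rank $(d-1)^4$ and, via the Künneth formula for joins (Thom--Sebastiani), decomposes as a tensor product over the four coordinates of the reduced homology $\widetilde{H}_0(\bbmu_d)$. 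Coordinate rescaling by $\bbmu_d^4$ acts factorwise, and each factor $\widetilde{H}_0(\bbmu_d)$ is isomorphic as a $\Z[\bbmu_d]$-module to the augmentation ideal $(u_i-1) \subset \Z[\bbmu_d]$, equivalently to $\Z[u_i]/\bigl(\frac{u_i^d-1}{u_i-1}\bigr)$ with cyclic shift $u_i$. Hence $H_3(F,\Z) \cong \Z[u_0,u_1,u_2,u_3]/\bigl(\frac{u_i^d-1}{u_i-1} : i\bigr)$ as a $\Z[\bbmu_d^4]$-module.

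Next I would pass from the affine fiber to the projective surface. The projection $x \mapsto [x]$ exhibits $F$ as an unramified $\bbmu_d$-cover of $\P^3 \bslash X_d$ for the diagonal subgroup $\bbmu_d \subset \bbmu_d^4$, which acts on the tensor product above by the product $u_0 u_1 u_2 u_3$. Through the Gysin/Alexander-duality identification of the primitive (vanishing) cohomology $H^2_0(X_d)$ with the diagonal-fixed part, this imposes the relation $u_0 u_1 u_2 u_3 = 1$ and produces the full ideal $I_d$. A Betti-number check confirms the identification: the quotient $\Z[u_0,\dots,u_3]/I_d$ has rank $d^3 - 4d^2 + 6d - 3$ (count the characters $(a_0,\dots,a_3) \in (\Z/d)^4$ with $\sum_i a_i \equiv 0$ and all $a_i \neq 0$), which equals $b_2(X_d) - 1$. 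Combined with Lemma~\ref{lPD} and Remark~\ref{rFermat}, this yields the asserted isomorphism of $\Z[\bbmu_d^4/\bbmu_d]$-modules $H^0_2(X_d,\Z) \cong \Z[u_0,u_1,u_2,u_3]/I_d$.

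For the intersection form I would start from the Seifert/variation form on the one-variable building block $\widetilde{H}_0(\bbmu_d)$ of the $A_{d-1}$-singularity $x^d$, and invoke the Thom--Sebastiani formula, which makes the form on $H_3(F)$ the tensor product of the four one-variable forms. Symmetrizing (on a surface the weight-$2$ form is symmetric) introduces the factors $(1 - u_i)$, and transporting the result through Poincar\'e duality (Lemma~\ref{lPD}) and the diagonal-invariant identification gives that $u^K \cdot u^L$ is the coefficient of $1$ in $-u^{K-L}(1-u_0)(1-u_1)(1-u_2)(1-u_3)$, computed in $\Z[\bbmu_d^4/\bbmu_d]$. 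Finally, for the submodule statement I would note that the isomorphism $\widetilde{H}_0(\bbmu_d) \cong (u_i-1)$ already realizes each factor as an ideal; tensoring and descending to the quotient group realizes $H^0_2(X_d,\Z)$ as the ideal $\prod_i(u_i-1)\cdot\Z[\bbmu_d^4/\bbmu_d]$, since multiplication by $\prod_i(u_i-1)$ annihilates each generator $\frac{u_i^d-1}{u_i-1}$ of $I_d$ and has kernel exactly $I_d$. As the geometric module is cyclic, generated by a single vanishing cycle with annihilator $I_d$, its presentation as this submodule coincides with the rescaling action. I expect the main obstacle to be the intersection-form step: reconciling the non-symmetric Seifert form of the affine Milnor fiber with the symmetric intersection form of the projective surface under Poincar\'e duality, and pinning down the signs and orientation conventions so that the ``coefficient of $1$'' extraction reproduces exactly the stated formula --- precisely the bookkeeping that must be matched against Looijenga's normalization.
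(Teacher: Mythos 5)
The paper offers no proof of this proposition at all: it simply cites Looijenga (Cor.~2.2 and the computation following Rem.~2.3), and your reconstruction via Pham's join theorem, the Thom--Sebastiani tensor decomposition of the Milnor lattice of $\sum_i x_i^d$, and descent along the diagonal $\bbmu_d$-cover of $\P^3\setminus X_d$ is exactly the argument underlying that citation; in particular your rank count $d^3-4d^2+6d-3=b_2(X_d)-1$ and the realization of the module as the ideal generated by $\prod_i(u_i-1)$ are correct. The one step you leave genuinely open --- fixing signs and normalizations so that the symmetrized Seifert form yields precisely the coefficient-of-$1$ formula with the factor $-u^{K-L}\prod_i(1-u_i)$ --- is the same bookkeeping the paper outsources to Looijenga, so your proposal matches the (delegated) proof in both substance and route.
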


\begin{remark}
\label{rCharacters}
Let $G= \bbmu^{4}_d/ \bbmu_d$. Fix a primitive $d$-th root of unity $\zeta_d$. The characters $\chi : G \to \C^*$ of $G$ are then given by
\begin{gather*}
\chi (u_i) = \zeta_d^{k_i}, \quad 0 \neq k_i \in \{ 1, \dots , d-1\}
\; \mathrm{and} \; \sum_{0 \leq i \leq 3} k_i \equiv 0 \mod d .
\end{gather*}
Conversely, all tuples $K = (k_0, k_1, k_2, k_3) \in \{ 1, \dots , d-1\}^4$ with zero sum mod $d$ give a character, which we denote by $\chi_K$. Notice that the complex zeros $Z$ of the ideal $I_d$ are precisely the points 
\[
P_K :=(\zeta_d^{k_0}, \dots , \zeta_d^{k_3})
\]
with $K$ as above.
\end{remark}

We now have to describe how $H^0_2 (X_d, \Z ) \simeq H^2_0 (X_d, \Z
)$, viewed as a sublattice of $H^2_0 (X_d, \Z )\otimes \C = H^2_0
(X_d, \C )$, is positioned relative to the Hodge subspaces of $H^2_0
(X_d, \C )$. Note that this will allow us to compute everything: the
algebraic part, the transcendental part, and the induced integral polarized Hodge structure.

\medskip

The Poincar\'{e} duality isomorphism $P$ is equivariant for the natural actions of $G$ on homology and cohomology (it is given by cap product with the fundamental class, which is invariant). Via $P$, we identify $H^{p,q} \subset H^2_0 (X_d, \C )$ with its image in $H^0_2(X_d, \C)$, which we denote by the same letter. 

\begin{proposition}
\label{pHodge}
The Hodge subspace $H^{p,q} \subset L\otimes \C$ is 
\[
H^{p,q} = \bigoplus_{\chi_K } (L\otimes \C )_{\chi_K }
\]
where $(L\otimes \C )_{\chi_K }$ is the eigenspace of the character $\chi_K$ and the sum runs over all characters with 
\[
|K|:= \sum_i k_i = (q+1)d .
\]
In other words, for $Z^{p,q} = \{ P_K \mid \sum_i k_i = (q+1)d \}$, 
\[
H^{p,q} = \left\{  \varphi \in \C [u_0, \dots , u_3]/I_d \mid \mathrm{supp} (\varphi ) \subset Z^{p,q} \right\}
\]
\end{proposition}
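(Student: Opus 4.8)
The plan is to combine the $G$-equivariant eigenspace decomposition coming from Proposition~\ref{pLattice} with Griffiths' residue description of the Hodge filtration of a smooth hypersurface, and then to pin down the only remaining datum---the Hodge type of each one-dimensional eigenspace---by an explicit character computation on holomorphic forms together with complex conjugation.

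First I would record that $G = \bbmu_d^4/\bbmu_d$ acts on $X_d$ by biholomorphisms, so each Hodge piece $H^{p,q}$ is a $G$-stable subspace of $L\otimes\C = H^2_0(X_d,\C)$. Since $G$ is finite abelian, $L\otimes\C$ splits into character eigenspaces, and the presentation $L\otimes\C \isom \C[u_0,\dots,u_3]/I_d$ identifies them: the relations $(u_i^d-1)/(u_i-1)$ vanish exactly on the characters with every $k_i\neq 0$, so $L\otimes\C = \bigoplus_K (L\otimes\C)_{\chi_K}$ with $K$ running over the admissible tuples of Remark~\ref{rCharacters}, each summand one-dimensional. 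Equivalently, $V(I_d)=\{P_K\}$ is reduced and the Chinese Remainder Theorem gives $\C[u]/I_d\isom\prod_{P_K}\C$, so ``lying in the eigenspace $\chi_K$'' is the same as ``having support at $P_K$''; this is what converts the eigenspace form of the assertion into the support formulation with $Z^{p,q}$. Because each $H^{p,q}$ is $G$-stable and each eigenline is one-dimensional, every eigenline lies entirely inside a single $H^{p,q}$, and the problem reduces to determining $q$ as a function of $K$.

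Next I would compute the holomorphic part. By Griffiths' theory, $H^{2,0}=F^2H^2_0$ is spanned by residues $\mathrm{Res}(P\,\Omega/F)$, where $F=\sum x_i^d$, $\Omega$ is the standard Euler form of degree $4$ on $\P^3$, and $P=x^A=\prod x_i^{a_i}$ is a monomial of degree $d-4$ with $0\le a_i\le d-2$ (the Fermat Jacobian ring being $\C[x]/(x_0^{d-1},\dots,x_3^{d-1})$). A representative $(\lambda_0,\dots,\lambda_3)$ of a class in $G$ fixes $F$ and multiplies $\Omega$ by $\prod_i\lambda_i$, so it scales this residue by $\prod_i\lambda_i^{a_i+1}$; hence the residue is an eigenvector for $\chi_K$ with $k_i=a_i+1\in\{1,\dots,d-1\}$ and $|K|=(d-4)+4=d$. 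Matching this convention to the one in Proposition~\ref{pLattice} (which fixes the sign ambiguity between $g\pullback$ and $(g\inv)\pullback$) and comparing dimensions---$h^{2,0}=p_g$ equals the number of such monomials, equivalently the number of admissible $K$ with $|K|=d$---yields $H^{2,0}=\bigoplus_{|K|=d}(L\otimes\C)_{\chi_K}$.

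Finally, complex conjugation carries $(L\otimes\C)_{\chi_K}$ to $(L\otimes\C)_{\ol{\chi_K}}=(L\otimes\C)_{\chi_{K'}}$ with $k_i'=d-k_i$ and $|K'|=4d-|K|$; applying it to $H^{2,0}$ and using $H^{0,2}=\ol{H^{2,0}}$ gives $H^{0,2}=\bigoplus_{|K|=3d}(L\otimes\C)_{\chi_K}$, and $H^{1,1}$ is forced to be the complementary sum over $|K|=2d$, the only remaining value since $|K|\equiv 0\bmod d$ and $4\le|K|\le 4d-4$. As $P$ is $G$-equivariant, these character labels transport unchanged from $H^2_0$ to $L$, producing the stated formula in terms of $(q+1)d$. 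I expect the main obstacle to be the third step: setting up Griffiths' residue isomorphism compatibly with the $G$-action, and in particular fixing the character conventions so that the cohomological residue computation is correctly matched with the homological presentation of Proposition~\ref{pLattice}; once that normalization is in place, the remainder is conjugation symmetry and a dimension count.
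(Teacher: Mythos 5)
Your proposal is correct in outline but takes a genuinely different route from the paper. The paper's own proof is essentially a citation: it transports the character decomposition through the $G$-equivariant Poincar\'e duality isomorphism and then invokes Looijenga's identification of the Hodge pieces with character spaces (the discussion after his Cor.~2.4), which in turn rests on the classical Griffiths--Pham--Shioda computation. You instead re-derive that computation from scratch: the CRT/eigenline reduction, the Griffiths residue description of $H^{2,0}$ via the Jacobian ring $\C[x]/(x_0^{d-1},\dots,x_3^{d-1})$, the character count $k_i=a_i+1$, $|K|=\deg P+4=(q+1)d$, and then conjugation plus complementarity for $H^{0,2}$ and $H^{1,1}$. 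This buys a self-contained proof at the cost of having to set up the residue isomorphism equivariantly. One caveat on the step you yourself flag as the crux: the dimension comparison you propose cannot resolve the convention ambiguity between $K$ and $-K$ (equivalently between $g\pullback$ and $(g\inv)\pullback$), because the involution $k_i\mapsto d-k_i$ bijects the admissible tuples with $|K|=d$ onto those with $|K|=3d$, so the two candidate character sets for $H^{2,0}$ have the same cardinality; you must instead fix the normalization by tracking the explicit identification in Proposition~\ref{pLattice} (i.e., Looijenga's vanishing-cycle basis) or by evaluating one explicit class. Since the proposition is only used in the sequel through data invariant under this involution (the sets $Z_A$, $Z_T$, and the $Z_\beta$ of Notation~\ref{nSubspaces} are all conjugation-stable), this ambiguity is harmless downstream, but it does need to be addressed to prove the statement as literally formulated.
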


\begin{proof}
We can identify the corresponding character spaces in $L\otimes_{\Z } \C \simeq H^0_2 (X_d, \C )$ and $H^2_0 (X_d, \C )$ via the diagram
$$
\xymatrix{
H^0_2(X_d,\C) = L_{\C }  \ar[r]^{P}_{\simeq} & H^2_0 (X_d, \C )\\
H^0_2(X_d,\Z) = L  \ar[r]^{P}_{\simeq} \ar@{^{(}->}[u] & H^2_0 (X_d, \Z )\ar@{^{(}->}[u] 
}
$$
We then apply \cite{looijenga:Fermat}, section after Cor.~2.4 on p.~8.
\end{proof}

\begin{lemma}
\label{lQuadratic}
The quadratic form $Q$ can be written as
\[
Q (\varphi , \psi ) = \sum_{P \in Z } \alpha_P\, \varphi (P) \psi (\bar{P})
\]
for some $\alpha_P \in \Q (\zeta_d)^*$ for each $P \in Z$.
\end{lemma}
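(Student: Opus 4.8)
The plan is to use the explicit description of the intersection form from Proposition~\ref{pLattice} together with the character decomposition to diagonalize $Q$ in the basis of characters, and then re-express everything in terms of evaluation at the points $P_K$. Recall that $L\otimes\C \simeq \C[u_0,\dots,u_3]/I_d$ decomposes as a sum of one-dimensional character eigenspaces $(L\otimes\C)_{\chi_K}$, one for each $P_K\in Z$, because the $P_K$ are exactly the complex zeros of $I_d$ and the ring is reduced and supported on finitely many points. Under this identification, a class $\varphi$ corresponds to the vector of its values $(\varphi(P_K))_{P_K\in Z}$, so evaluation at the points $P\in Z$ gives the coordinates dual to the character decomposition. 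I would first record the idempotents $e_{\chi_K}$ of the group algebra $\C[G]$, which satisfy $e_{\chi_K}(P_L)=\delta_{KL}$, so that $\varphi=\sum_K \varphi(P_K)\,e_{\chi_K}$.

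Next I would compute $Q(e_{\chi_K},e_{\chi_L})$ directly from the formula in Proposition~\ref{pLattice}. Since $Q$ is $G$-equivariant in the appropriate sense and the two factors sit in distinct character spaces unless the characters are complex-conjugate, the orthogonality condition of the Hodge structure (Definition~\ref{dPolarizedHodge}) forces $Q(e_{\chi_K},e_{\chi_L})=0$ unless $\chi_L=\overline{\chi_K}$, i.e.\ unless $P_L=\bar P_K$. Concretely, $\overline{P_K}=(\zeta_d^{-k_0},\dots,\zeta_d^{-k_3})=P_{K'}$ with $k_i'=d-k_i$; this is the pairing that makes $Q$ nonzero. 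Plugging $\varphi=e_{\chi_K}$ and $\psi=e_{\chi_L}$ into the definition $Q(\varphi,\psi)=\sum_P\alpha_P\,\varphi(P)\psi(\bar P)$ I want to prove, the right-hand side collapses to $\alpha_{P_K}$ when $L=K$ (since $\psi(\bar P)=e_{\chi_L}(\bar P)$ picks out $\bar P=\bar P_K$), so the content of the lemma is precisely that $Q(e_{\chi_K},\bullet)$ pairs nontrivially only with the conjugate eigenvector and that the resulting diagonal coefficient is the desired $\alpha_{P_K}$.

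Then, to pin down $\alpha_P$, I would extract it from the bilinear form of Proposition~\ref{pLattice}: $\alpha_{P_K}$ is the value obtained by pairing the $\chi_K$-eigenvector with the $\overline{\chi_K}$-eigenvector, which by the coefficient-of-$1$ recipe amounts to evaluating the Fourier transform of $-(1-u_0)(1-u_1)(1-u_2)(1-u_3)$ at the character $\chi_K$. Since $(1-u_i)$ evaluates at $P_K$ to $1-\zeta_d^{k_i}\in\Q(\zeta_d)$, and the normalization by $|G|=d^3$ coming from the idempotents is also a nonzero element of $\Q(\zeta_d)$, the product $\alpha_{P_K}$ lies in $\Q(\zeta_d)^*$, with nonvanishing guaranteed because each $k_i\neq 0 \bmod d$. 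This shows both that $Q$ has the claimed diagonal-in-evaluation-coordinates shape and that each $\alpha_P$ is a nonzero element of $\Q(\zeta_d)$.

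The main obstacle I expect is bookkeeping the precise constant: one must carefully track the normalization of the idempotents (the factor $|G|\inv=d^{-3}$) and the sign/shift coming from the $-u^{K-L}(1-u_0)\cdots(1-u_3)$ in Proposition~\ref{pLattice}, making sure that after passing from ``coefficient of $1$ in the group ring'' to ``evaluation at $P$'' the cross terms genuinely vanish and the diagonal term is exactly $\alpha_P=\varphi(P)\psi(\bar P)$ with a \emph{single} scalar $\alpha_P$ rather than a $P$-dependent bilinear expression. The conceptual point is clean---diagonalization by characters plus orthogonality of conjugate-paired eigenspaces---but verifying that the scalar depends only on $P$ (and not separately on $\varphi,\psi$ beyond their values) requires unwinding the explicit formula rather than citing abstract equivariance.
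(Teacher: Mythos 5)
Your core argument is the same as the paper's: the paper's entire proof consists of observing that $Q$ is $G$-invariant, so for $v\in (L\otimes\C)_{\chi}$ and $w\in(L\otimes\C)_{\chi'}$ one has $Q(v,w)=Q(gv,gw)=\chi(g)\chi'(g)Q(v,w)$, forcing $Q(v,w)=0$ unless $\chi'=\chi^{-1}=\overline{\chi}$; since the evaluation coordinates $\varphi\mapsto(\varphi(P_K))_K$ are exactly the coordinates dual to the character decomposition, this is the claimed shape of $Q$. Two small points of imprecision in your write-up: the vanishing of the cross terms is forced by the $G$-invariance alone, \emph{not} by the Hodge orthogonality condition of Definition~\ref{dPolarizedHodge} (that condition is too coarse --- it cannot separate two distinct characters lying in the same $H^{p,q}$), and your statement that the right-hand side ``collapses to $\alpha_{P_K}$ when $L=K$'' should read $P_L=\overline{P_K}$, i.e.\ $\chi_L=\overline{\chi_K}$, as your surrounding text correctly indicates. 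Your further plan to compute $\alpha_P$ explicitly by Fourier inversion goes beyond what the lemma requires and is where the real bookkeeping danger lies: the ring $\C[u_0,\dots,u_3]/I_d$ is a proper quotient of the group algebra (the characters with some $k_i\equiv 0$ are killed), while the coefficient-of-$1$ recipe of Proposition~\ref{pLattice} is computed in the full group ring, so the idempotents and the normalization must be handled there. The paper sidesteps this entirely: nonvanishing of $\alpha_P$ follows from nondegeneracy of $Q$ together with the block structure just established, and the specific values of $\alpha_P$ that are actually needed are determined later (in Propositions~\ref{pBasis} and \ref{pBasis2}) by evaluating $Q(\varphi_1,\varphi_1)$ in two ways rather than by a closed formula.
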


\begin{proof}
First, note that $Q$ is invariant under $G$ and also satisfies 
\[
Q(v,w) = Q(gv,gw) = \chi(g)\chi'(g) Q(v,w),
\]
for all $v\in (L\otimes\C)_{\chi}$, $w \in (L\otimes\C)_{\chi'}$ and
all $g\in G$.  Hence $Q(v,w) \neq 0$ only if $\chi =
(\chi')^{-1}=\overline{\chi'}$.
\end{proof}

\begin{remark}
\label{rAlpha}
The whole construction up to now is also invariant under the symmetric group $\mathfrak{S}_4$ acting by permutations on the $u_i$. Therefore $\alpha_P$ is constant on the orbits of the action of $\mathfrak{S}_4$ on $Z$.
\end{remark}

\begin{proposition}
\label{pAlgebraic}
Consider the action of the Galois group $\Gamma = \mathrm{Gal} (\Q (\zeta_d )/\Q )$ on $Z$. Let
\[
Z_A := \left\{ P_K \mid \Gamma \cdot P_K \subset Z^{1,1} \right\} 
\]
and $Z_T := Z \backslash Z_A$.
Then
\[
A_X = \left\{  \varphi \in \Z [u_0, \dots , u_3]/I_d \mid \mathrm{supp} (\varphi ) \subset Z_A \right\}
\]
and 
\[
T_X = \left\{  \varphi \in \Z [u_0, \dots , u_3]/I_d \mid \mathrm{supp} (\varphi ) \subset Z_T \right\}.
\]
\end{proposition}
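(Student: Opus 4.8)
The plan is to treat each $\varphi \in L := \Z[u_0,\dots,u_3]/I_d$ as a function on the zero set $Z = \{P_K\}$ of $I_d$. Since the characters $\chi_K$ are pairwise distinct (Remark~\ref{rCharacters}) and the associated eigenspaces $(L\otimes\C)_{\chi_K}$ of the commutative group algebra are one-dimensional, spanned by the corresponding idempotent $e_{P_K}$, evaluation gives an isomorphism $L\otimes\C \cong \C^{Z}$ under which the $\chi_K$-component of $\varphi$ is its value $\varphi(P_K)$; thus $\mathrm{supp}(\varphi) = \{P\in Z : \varphi(P)\neq 0\}$. For $W\subset Z$ write $L_W = \{\varphi\in L : \mathrm{supp}(\varphi)\subset W\}$; this is a saturated subgroup of $L$, being the kernel of the evaluation map $L \to \prod_{P\notin W}\Q(\zeta_d)$, $\varphi\mapsto(\varphi(P))_P$, whose target is torsion free. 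With this notation Proposition~\ref{pHodge} reads $H^{1,1} = L_{Z^{1,1}}\otimes\C$, so $A_X = H^{1,1}\cap L = \{\varphi\in L : \mathrm{supp}(\varphi)\subset Z^{1,1}\}$. The task is to sharpen the condition $\mathrm{supp}(\varphi)\subset Z^{1,1}$ to $\mathrm{supp}(\varphi)\subset Z_A$, and then to compute the orthogonal. (We read $A_X$ inside the primitive lattice $H^2_0\cong L$; the polarization class $h$ lies in the full algebraic lattice but spans $L^{\perp}$, since $L = h^{\perp}$, so $T_X = A_X^{\perp}$ reduces to the orthogonal of $L_{Z_A}$ taken inside $L$.)

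The heart of the argument is that for $\varphi\in L$ the set $\mathrm{supp}(\varphi)$ is stable under $\Gamma = \mathrm{Gal}(\Q(\zeta_d)/\Q)$. Indeed $\Gamma$ acts on $Z$ by $\sigma_a : P_K = (\zeta_d^{k_0},\dots,\zeta_d^{k_3}) \mapsto (\zeta_d^{ak_0},\dots,\zeta_d^{ak_3}) = P_{aK}$, and since $\varphi$ has integer coefficients its values lie in $\Z[\zeta_d]$ and satisfy $\sigma_a(\varphi(P_K)) = \varphi(\sigma_a P_K)$. As $\sigma_a$ is a field automorphism, $\varphi(P_K) = 0$ if and only if $\varphi(P_{aK}) = 0$, so $\mathrm{supp}(\varphi)$ is a union of $\Gamma$-orbits. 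Combined with $\mathrm{supp}(\varphi)\subset Z^{1,1}$, this says exactly that $\mathrm{supp}(\varphi)$ is a union of $\Gamma$-orbits contained entirely in $Z^{1,1}$, which is the defining condition $\mathrm{supp}(\varphi)\subset Z_A$. The reverse inclusion is immediate because $Z_A\subset Z^{1,1}$, and hence $A_X = L_{Z_A}$. This step uses that the coefficients of $\varphi$ are $\Gamma$-fixed, i.e.\ rational, which is precisely what fails for a general class in $H^{1,1}\subset L\otimes\C$.

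It remains to show $T_X = A_X^{\perp} = L_{Z_T}$, the orthogonal taken inside $L$. First compute it over $\C$ using Lemma~\ref{lQuadratic}: in the idempotent basis $(e_P)_{P\in Z}$ one has $Q(e_P, e_{P'}) = \alpha_P\,\delta_{P',\bar P}$ with $\alpha_P\neq 0$, so $Q$ pairs $e_P$ nondegenerately with $e_{\bar P}$. Complex conjugation is the element $\sigma_{-1}\in\Gamma$, so the $\Gamma$-stable set $Z_A$ is conjugation-stable, $\overline{Z_A} = Z_A$; it follows at once that $(L_{Z_A}\otimes\C)^{\perp} = L_{Z_T}\otimes\C$, where $Z_T = Z\smallsetminus Z_A$. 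Since $Q$ is rational and nondegenerate on $L$ (Lemma~\ref{lPD}), the integral orthogonal is $A_X^{\perp} = (L_{Z_A}\otimes\C)^{\perp}\cap L = (L_{Z_T}\otimes\C)\cap L = L_{Z_T}$, the final equality holding because $L_{Z_T}$ is saturated. Hence $T_X = L_{Z_T}$.

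The one step requiring care is this last descent: passing from the $\C$-orthogonal to the integral orthogonal $A_X^{\perp}$ must not enlarge the lattice, which is ensured by nondegeneracy of $Q$ over $\Q$ together with the saturation of $L_{Z_T}$. Conceptually, however, the real content is the Galois-stability of supports in the second paragraph; the remainder is the bookkeeping that converts the eigenspace description of the Hodge filtration (Proposition~\ref{pHodge}) and the diagonal shape of $Q$ (Lemma~\ref{lQuadratic}) into statements about the integral lattices $A_X$ and $T_X$.
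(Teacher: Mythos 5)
Your proof is correct. The second half (deducing the description of $T_X$ from the diagonal shape of $Q$ in Lemma~\ref{lQuadratic} together with the conjugation-stability of $Z_A$) is exactly the computation the paper has in mind when it says the assertion about $T_X$ ``follows from Lemma~\ref{lQuadratic}''; you merely spell out the descent from the $\C$-orthogonal to the integral orthogonal, which is harmless since $L_{Z_T}=L\cap(L_{Z_T}\otimes\C)$ by definition. Where you genuinely diverge is the first half: the paper disposes of the assertion about $A_X$ by citing Theorem~I(iii) of \cite{shioda:Hodge_Fermat}, whereas you prove it from scratch by observing that an element $\varphi$ of $L$ with integer (hence $\Gamma$-fixed) coefficients has Galois-stable support, because $\sigma_a(\varphi(P_K))=\varphi(P_{aK})$ and field automorphisms preserve vanishing; combined with Proposition~\ref{pHodge} this upgrades the condition $\mathrm{supp}(\varphi)\subset Z^{1,1}$ to $\mathrm{supp}(\varphi)\subset Z_A$. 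This substitution is legitimate and arguably cleaner in context: since $A_X$ is \emph{defined} as $H^{1,1}\cap H^2(X,\Z)$, only the combinatorial characterization of integral Hodge classes is needed, not the algebraicity statement that is the real content of Shioda's theorem, and your short Galois argument supplies precisely that while keeping the proof self-contained. You are also more explicit than the paper about the identification of $A_X$ (which contains the hyperplane class $h$) with a sublattice of the primitive lattice $L=h^{\perp}$ before taking orthogonals; this is a point the paper leaves implicit. I see no gaps.
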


\begin{proof}
By \cite{shioda:Hodge_Fermat}, Theorem I(iii), we get the assertion about $A_X$. The assertion about $T_X$ then follows from Lemma \ref{lQuadratic}.
\end{proof}

\section{The Fermat sextic}
\label{sGaloisSextic}

\newcommand{\fermat}{X_6}
\newcommand{\Sfour}{\mathfrak{S}_4}

Let now $\fermat$ be the sextic Fermat surface in $\P^3$ and $\zeta$ a
primitive $6$th root of unity. Here $\Gamma = \mathrm{Gal} (\Q
(\zeta_d )/\Q )$ is generated by complex conjugation. Therefore,
\[
A_X\otimes \C = H^{1,1}
\]
and $X_6$ is a surface of maximal Picard rank. Hence 
\[
T_X \otimes \C = H^{2,0} \oplus H^{0,2}.
\]

\begin{notation}
\label{nSubspaces}
Let
\begin{align*}
Z_{(1,1,1,3)} &:= \left\{ P_{(1,1,1,3)} , P_{(1,1,3,1)}, P_{(1,3,1,1)}, P_{(3,1,1,1)}, \right.\\
                        & \left. \quad\quad P_{(5,5,5,3)}, P_{(5,5,3,5)}, P_{(5,3,5,5)}, P_{(3,5,5,5)} \right\}, \\
Z_{(1, 1,2,2)} &:= \left\{ P_{(1,1,2,2)}, P_{(2,2,1,1)}, P_{(5,5,4,4)}, P_{(4,4,5,5)} \right\} , \\
Z_{(1, 2,1,2)} &:= \left\{ P_{(1,2,1,2)}, P_{(2,1,2,1)}, P_{(5,4,5,4)}, P_{(4,5,4,5)} \right\} , \\
Z_{(1, 2,2,1)} &:= \left\{ P_{(1,2,2,1)}, P_{(2,1,1,2)}, P_{(5,4,4,5)}, P_{(4,5,5,4)} \right\} 
\end{align*}
and
\[
L_{\beta } := \left\{  \varphi \in L \mid \mathrm{supp} (\varphi ) \subset Z_\beta \right\}.
\]
\end{notation}
We have
\[
Z_T = Z_{(1,1,1,3)} \cup Z_{(1,1,2,2)} \cup Z_{(1,2,1,2)} \cup Z_{(1,2,2,1)}
\]
and 
\[
T_X \otimes_{\Z} \Q = \left(  L_{(1,1,1,3)} \oplus L_{(1,1,2,2)} \oplus L_{(1,2,1,2)} \oplus L_{(1,2,2,1)} \right)  \otimes_{\Z}\Q .
\]
In the rest of this section we show that this decomposition holds even over $\Z$. The necessary computations were checked using a Macaulay2 script \cite{ABB13}, \cite{M2}. 

\begin{proposition}
\label{pBasis}
There is a sublattice $L'_{(1,1,1,3)}$ of $L_{(1,1,1,3)}$ with a basis such that the intersection form is given by 
\[
Q_{(1,1,1,3)}=
\begin{pmatrix}
       32&
       8&
       8&
       8&
       4&
       16&
       16&
       16\\
       8&
       32&
       8&
       8&
       16&
       4&
       16&
       16\\
       8&
       8&
       32&
       8&
       16&
       16&
       4&
       16\\
       8&
       8&
       8&
       32&
       16&
       16&
       16&
       4\\
       4&
       16&
       16&
       16&
       32&
       8&
       8&
       8\\
       16&
       4&
       16&
       16&
       8&
       32&
       8&
       8\\
       16&
       16&
       4&
       16&
       8&
       8&
       32&
       8\\
       16&
       16&
       16&
       4&
       8&
       8&
       8&
       32\\
       \end{pmatrix}.
\]
We have $\det Q_{(1,1,1,3)} = 2^{16}3^{12}$.
\end{proposition}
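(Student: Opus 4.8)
The plan is to realize $L_{(1,1,1,3)}$ explicitly inside $L = \Z[u_0,\dots,u_3]/I_6$ and then to read off the pairing from the combinatorial formula of Proposition~\ref{pLattice}. Writing $G = \bbmu_6^4/\bbmu_6$ and $N_i = 1 + u_i + \dots + u_i^{5}$, the identity $(u_i - 1)N_i = u_i^6 - 1$ shows $I_6 = (u_0u_1u_2u_3 - 1, N_0,\dots,N_3)$, so $L \cong \Z[G]/(N_0,\dots,N_3)$ and the characters occurring in $L\otimes\C$ are exactly the $\chi_K$ with all $k_i \neq 0$. By Proposition~\ref{pHodge} and Notation~\ref{nSubspaces}, an element $\varphi = \sum_M c_M u^M$ lies in $L_{(1,1,1,3)}$ iff its evaluations $\varphi(P_K) = \sum_M c_M \zeta^{K\cdot M}$ vanish at every $P_K \in Z \smallsetminus Z_{(1,1,1,3)}$; equivalently $L_{(1,1,1,3)} = L \cap \bigoplus_{P_K \in Z_{(1,1,1,3)}} (L\otimes\C)_{\chi_K}$, a rank-$8$ sublattice since each eigenspace is one-dimensional. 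Note too that $Z_{(1,1,1,3)}$ is a single orbit of $\Sfour \times \Gamma$: the four coordinate permutations of $(1,1,1,3)$, together with their complex conjugates, the permutations of $(5,5,5,3)$.

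First I would produce eight integral generators spanning $L_{(1,1,1,3)} \otimes \Q$. The natural device is the projector $\pi = \sum_{P_K \in Z_{(1,1,1,3)}} e_K$ onto the relevant eigenspaces, where $e_K = |G|^{-1}\sum_{g\in G} \overline{\chi_K(g)}\,g$. Since $Z_{(1,1,1,3)}$ is stable under $\Gamma$, the element $\pi$ is $\Gamma$-invariant and so has coefficients in $\Q$; hence $\pi(u^M) \in L_{(1,1,1,3)} \otimes \Q$ for every monomial, and clearing denominators gives integral elements. Choosing eight monomials $u^M$ for which the matrix $\big(\zeta^{K\cdot M}\big)$ is invertible yields a $\Q$-basis, and the integral representatives generate a finite-index sublattice $L'_{(1,1,1,3)} \subseteq L_{(1,1,1,3)}$. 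In practice one solves the integral linear system ``$\varphi(P_K)=0$ for $P_K\notin Z_{(1,1,1,3)}$'' directly, which is the role of the Macaulay2 computation \cite{ABB13}.

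Next I would compute the Gram matrix from Proposition~\ref{pLattice}, namely that $u^K \cdot u^L$ is the coefficient of $1$ in $-u^{K-L}(1-u_0)(1-u_1)(1-u_2)(1-u_3)$ in $\Z[G]$, extended bilinearly. Choosing the eight generators to be permuted by $\Sfour \times \Gamma$ compatibly with its action on $Z_{(1,1,1,3)}$ (complex conjugation being realized on $L$ by $u_i \mapsto u_i\inv$), the invariance of $Q$ forces the block-circulant form $\left(\begin{smallmatrix} A & B \\ B & A\end{smallmatrix}\right)$, where the two blocks index the permutations of $(1,1,1,3)$ and of $(5,5,5,3)$ and are interchanged by conjugation. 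Because $\Sfour$ acts $2$-transitively on the four permutations in each block, $A = 24\,I_4 + 8\,J_4$ and $B = -12\,I_4 + 16\,J_4$ are each combinations of the identity $I_4$ and the all-ones matrix $J_4$ (Remark~\ref{rAlpha}), so only the four values $32,8,4,16$ remain to be evaluated. This reduction, together with the predicted block shape, is a strong internal check on the computation, which is where the genuine content lies.

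Finally, for the determinant I would block-diagonalize: conjugating $\left(\begin{smallmatrix} A & B \\ B & A\end{smallmatrix}\right)$ by $\left(\begin{smallmatrix} I_4 & I_4 \\ I_4 & -I_4\end{smallmatrix}\right)$ gives $\det Q_{(1,1,1,3)} = \det(A+B)\,\det(A-B)$. Here $A + B = 12\,I_4 + 24\,J_4$ has eigenvalues $108$ and $12$ (the latter with multiplicity $3$), so $\det(A+B) = 108\cdot 12^3 = 2^8 3^6$, while $A - B = 36\,I_4 - 8\,J_4$ has eigenvalues $4$ and $36$ (the latter with multiplicity $3$), so $\det(A-B) = 4\cdot 36^3 = 2^8 3^6$; hence $\det Q_{(1,1,1,3)} = 2^{16}3^{12}$. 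The main obstacle is the middle steps: exhibiting a correct integral basis and verifying the exact entries of the pairing over $\Q(\zeta)$. The $\Sfour\times\Gamma$-symmetry collapses most of this labor, and the determinant computation above furnishes an independent consistency check on the resulting matrix.
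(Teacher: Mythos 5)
Your proposal is correct and takes essentially the same route as the paper: both exhibit a finite-index sublattice by integral interpolation at the eight points of $Z_{(1,1,1,3)}$ (with the actual integrality verification delegated to the Macaulay2 script), obtain the Gram matrix from Looijenga's intersection formula combined with the eigenvalue decomposition of Lemma~\ref{lQuadratic}, and use the $\mathfrak{S}_4\times\Gamma$-equivariance of the basis to constrain its shape. The only differences are in bookkeeping --- the paper writes $Q=\alpha_{P_{(1,1,1,3)}}M_{(1,1,1,3)}^tM_{\overline{(1,1,1,3)}}$ and calibrates the single constant $\alpha_{P_{(1,1,1,3)}}=1/108$ from one entry, whereas you compute the four distinct entries directly; and your block-diagonalization $\det Q_{(1,1,1,3)}=\det(A+B)\det(A-B)=(2^{8}3^{6})^{2}$ is a clean explicit justification of the determinant that the paper leaves to ``direct computation.''
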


\begin{proof}
Consider the matrix
\[
	M_{(1,1,1,3)} = 12 (\zeta+1) \begin{pmatrix}
	\zeta^4 & \zeta^2 & \zeta^2 & \zeta^2 & \zeta^1 & \zeta^3 & \zeta^3 & \zeta^3 \\ 
     	\zeta^2 & \zeta^4 & \zeta^2 & \zeta^2 & \zeta^3 & \zeta^1 & \zeta^3 & \zeta^3 \\ 
     	\zeta^2 & \zeta^2 & \zeta^4 & \zeta^2 & \zeta^3 & \zeta^3 & \zeta^1 & \zeta^3 \\ 
     	\zeta^2 & \zeta^2 & \zeta^2 & \zeta^4 & \zeta^3 & \zeta^3 & \zeta^3 & \zeta^1 \\ 
     	\zeta^1 & \zeta^3 & \zeta^3 & \zeta^3 & \zeta^4 & \zeta^2 & \zeta^2 & \zeta^2 \\ 
     	\zeta^3 & \zeta^1 & \zeta^3 & \zeta^3 & \zeta^2 & \zeta^4 & \zeta^2 & \zeta^2 \\ 
     	\zeta^3 & \zeta^3 & \zeta^1 & \zeta^3 & \zeta^2 & \zeta^2 & \zeta^4 & \zeta^2 \\ 
     	\zeta^3 & \zeta^3 & \zeta^3 & \zeta^1 & \zeta^2 & \zeta^2 & \zeta^2 & \zeta^4
%	     	b & a & a & a & \overline{b} & c & c & c \\
%     	     	a & b & a & a & c & \overline{b} & c & c \\
%     		a & a & b & a & c & c & \overline{b} & c \\
%     		a & a & a & b & c & c & c & \overline{b} \\
%     		\overline{b} & c & c & c & b & a & a & a \\
%     		c & \overline{b} & c & c & a & b & a & a \\
%     		c & c & \overline{b} & c & a & a & b & a \\
%     		c & c & c & \overline{b} & a & a & a & b
	\end{pmatrix}.
\]
Denote by $P_i$ the $i$-th point of $Z_{(1,1,1,3)}$. By interpolation we find polynomials $\varphi_j$ in $\Z [u_0, \dots , u_3]/I_6$ with $\varphi_j (P_i) = (M_{(1,1,1,3)})_{ij}$ and zero on all other points in $Z$. We can choose
\[
\varphi_1 = ({u}_{3}^{4},
      {u}_{3}^{3},
      {u}_{3}^{2},
      {u}_{3},
      1
    )
      \begin{pmatrix}1\\
      {s}_{1}^{2}-2 {s}_{2}+{s}_{1}+2\\
      {s}_{1}^{2}-3 {s}_{2}+1\\
      -{s}_{1}^{2} {s}_{2}+2 {s}_{2}^{2}-{s}_{3}+2 {s}_{1}^{2}-5 {s}_{2}+{s}_{1}+1\\
      -{s}_{1}^{2} {s}_{3}+3 {s}_{2} {s}_{3}-{s}_{1}^{2} {s}_{2}+3 {s}_{2}^{2}-{s}_{1} {s}_{3}-3 {s}_{3}-2 {s}_{2}-1\\
      \end{pmatrix}
\]
with $s_1=u_0+u_1+u_2$, $s_2=u_0u_1+u_0u_2+u_1u_2$ and $s_3 = u_0u_1u_2$. By applying appropriate permutations of the variables we obtain $\varphi_2, \varphi_3, \varphi_4$. The remaining polynomials are obtained from these by applying the substitution $u_i \mapsto u_i^{-1}=u_i^5$. This induces complex conjugation on the points.

Let $L'_{(1,1,1,3)}$ be the sublattice of $L_{(1,1,1,3)}$ spanned by the $\varphi_j$.
By Lemma \ref{lQuadratic} and Remark \ref{rAlpha} we have that the intersection form on $L'_{(1,1,1,3)}$ is
\[
	Q_{(1,1,1,3)} =\alpha_{P_{(1,1,1,3)}} M_{(1,1,1,3)}^t M_{\overline{(1,1,1,3)}}       
\]
where $M_{\overline{(1,1,1,3)}} $ is obtained from $M_{(1,1,1,3)}$ by interchanging the first four rows with the last four rows (since complex conjugation interchanges the first four points in $Z_{(1,1,1,3)}$ with the last four points). We compute $\alpha_{P_{(1,1,1,3)}}$ by evaluating $Q(\varphi_1, \varphi_1)$ in two different ways: firstly, by using Looijenga's formula in Proposition \ref{pLattice}, and secondly, by Lemma \ref{lQuadratic}. One finds
\[
\alpha_{P_{(1,1,1,3)}}= \frac{1}{108} .
\]
Direct computation gives the above matrix for $Q_{(1,1,1,3)}$ and its determinant.
\end{proof}

Similarly we have the following.

\begin{proposition}
\label{pBasis2}
There is a sublattice $L'_{(1,1,2,2)}$ of $L_{(1,1,2,2)}$ with a basis such that the intersection form is given by
\[
Q_{(1,1,2,2)}=  \begin{pmatrix}24&
      12&
      0&
      0\\
      12&
      24&
      0&
      0\\
      0&
      0&
      24&
      12\\
      0&
      0&
      12&
      24\\
      \end{pmatrix}.
\]
We have $\det Q_{(1,1,2,2)} = 2^83^6$. The same is true for the lattices $L_{(1,2,1,2)}$ and $L_{(1,2,2,1)}$.
	
\end{proposition}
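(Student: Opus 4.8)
Proposition~\ref{pBasis2} asserts two things: first, that there is a sublattice $L'_{(1,1,2,2)}$ of $L_{(1,1,2,2)}$ whose intersection form is the displayed $4\times 4$ matrix, with determinant $2^8 3^6$; and second, that the same holds verbatim for $L_{(1,2,1,2)}$ and $L_{(1,2,2,1)}$. The natural approach is to imitate the proof of Proposition~\ref{pBasis} exactly, replacing the $8$-element set $Z_{(1,1,1,3)}$ by the $4$-element set $Z_{(1,1,2,2)}$. The strategy has three steps: (i) produce an explicit $4\times 4$ ``character matrix'' $M_{(1,1,2,2)}$ recording the values of a candidate basis at the four points of $Z_{(1,1,2,2)}$; (ii) realize these prescribed value-vectors by actual polynomials $\varphi_j \in \Z[u_0,\dots,u_3]/I_6$ via interpolation, so that $L'_{(1,1,2,2)}$ is an honest integral sublattice; and (iii) compute the Gram matrix using Lemma~\ref{lQuadratic} together with the scalar $\alpha_P$, which Remark~\ref{rAlpha} tells us is constant on $\mathfrak{S}_4$-orbits.

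**Carrying it out.** First I would fix an ordering of the four points $P_{(1,1,2,2)}, P_{(2,2,1,1)}, P_{(5,5,4,4)}, P_{(4,4,5,5)}$ and note that complex conjugation sends $P_{(1,1,2,2)}\mapsto P_{(5,5,4,4)}$ and $P_{(2,2,1,1)}\mapsto P_{(4,4,5,5)}$ (since $\bar\zeta=\zeta^{-1}=\zeta^5$). By Lemma~\ref{lQuadratic} the form is $Q(\varphi,\psi)=\sum_{P}\alpha_P\,\varphi(P)\psi(\bar P)$, and by Remark~\ref{rAlpha} a single scalar $\alpha$ governs this whole orbit. Thus once an interpolating basis $\{\varphi_j\}$ is chosen with value-matrix $M_{(1,1,2,2)}$, the Gram matrix is $\alpha\, M_{(1,1,2,2)}^t\, M_{\overline{(1,1,2,2)}}$, where $M_{\overline{(1,1,2,2)}}$ swaps the first two columns with the last two, exactly as in Proposition~\ref{pBasis}. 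I would then determine $\alpha$ by computing $Q(\varphi_1,\varphi_1)$ in two independent ways: once from Looijenga's combinatorial formula in Proposition~\ref{pLattice} (coefficient of $1$ in $-u^{K-L}\prod_i(1-u_i)$), and once from the character expression, forcing the two to agree. The interpolation step (ii) is pure linear algebra over $\Q(\zeta)$ and, as in the previous proof, is precisely where the Macaulay2 verification \cite{ABB13} enters; writing down the symmetric-polynomial expressions for the $\varphi_j$ explicitly is routine but tedious, so I would simply assert their existence and record the resulting matrix. Finally, the assertion for $L_{(1,2,1,2)}$ and $L_{(1,2,2,1)}$ follows with no new work: these three point-sets are related by the permutation action of $\mathfrak{S}_4$ on the coordinates $u_i$, and since the entire construction is $\mathfrak{S}_4$-equivariant (Remark~\ref{rAlpha}), the induced lattices are isometric. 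Hence the same Gram matrix and the same determinant $2^8 3^6$ result.

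**The main obstacle.** The conceptual content is light, so the real difficulty is arithmetic bookkeeping: choosing the candidate matrix $M_{(1,1,2,2)}$ so that the product $\alpha\,M^t M_{\overline{\phantom{m}}}$ lands on the clean integral matrix displayed, and verifying that the interpolating polynomials genuinely have integer coefficients (so that $L'_{(1,1,2,2)}\subset L_{(1,1,2,2)}$ over $\Z$, not merely over $\Q(\zeta)$). The integrality is the subtle point, since a priori interpolation produces polynomials over $\Q(\zeta)$; one must check that the specific value-vectors chosen yield $\Z$-coefficients. This is exactly the kind of claim that is confirmed by the accompanying computer script rather than by hand, and I expect the published proof to delegate it there, recording only the final Gram matrix and determinant.
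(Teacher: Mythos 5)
Your proposal follows essentially the same route as the paper's proof: an explicit value matrix $M_{(1,1,2,2)}$, integral interpolating polynomials (which the paper writes out in terms of $q_1=u_0+u_1$, $q_2=u_0u_1$, $r_1=u_2+u_3$, $r_2=u_2u_3$, with the remaining basis vectors obtained via $u_i\mapsto u_i^5$ and multiplication by $u_1$), the Gram matrix $\alpha_{P_{(1,1,2,2)}}M_{(1,1,2,2)}^t M_{\overline{(1,1,2,2)}}$ with $\alpha=1/72$ pinned down by comparing $Q(\psi_1,\psi_1)$ via Proposition~\ref{pLattice} and Lemma~\ref{lQuadratic}, and the other two lattices by $\mathfrak{S}_4$-symmetry. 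The only slip is that $M_{\overline{(1,1,2,2)}}$ is obtained by interchanging the first two \emph{rows} with the last two (conjugation permutes the points, which index rows), not the columns.
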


\begin{proof}
Consider
\[
M_{(1,1,2,2)} = 12 (\zeta +1) \begin{pmatrix}
	\zeta^0 & \zeta^5 & \zeta^1 & \zeta^0 \\
	\zeta^2 & \zeta^3 & \zeta^4 & \zeta^5 \\
	\zeta^5 & \zeta^0 & \zeta^4 & \zeta^5 \\
	\zeta^3 & \zeta^2 & \zeta^1 & \zeta^0 \\
%           -c &-a & \overline{b} & -c \\
%            a & c & b &-a \\
%           -a &-c & b &-a \\
%             c & a & \overline{b} &-c \\
\end{pmatrix}.
\]
Denote by $P_i$ the $i$-th point of $Z_{(1,1,2,2)}$. By interpolation we find polynomials $\psi_j$ in $\Z [u_0, \dots , u_3]/I_6$ with $\psi_j (P_i) = (M_{(1,1,2,2)})_{ij}$ and zero on all other points in $Z$. We can choose
\begin{align*}
	\psi_1 &=  q_1 q_2 r_1^2 - q_1 r_1^2 r_2+q_1 r_2^2-q_1^3 r_1+3 q_1 q_2 r_1+q_2 r_1^2
	-q_1^2r_2-q_1 r_1 r_2 \\ 
	& \quad -q_1^3+3 q_1 q_2 
      -2 q_1^2 r_1+3 q_2 r_1-q_1 r_2
      +r_1r_2-q_1^2 \\
      & \quad +2 q_2-q_1 r_1-r_1^2+2
      r_2-2 q_1-2 r_1-2
\end{align*}
with $q_1 = u_0+u_1$, $
      q_2 = u_0 u_1$, $
      r_1 = u_2+u_3$ and $
      r_2 = u_2 u_3
      $.
Replacing $u_i$ by $u_i^5$ gives $\psi_2$ 
and the values of the second column. The third and forth column are realized by $\psi_3=u_1\psi_1$ and $\psi_4=u_1\psi_2$. 
By Lemma \ref{lQuadratic} and Remark \ref{rAlpha} we have that the intersection form on $L'_{(1,1,2,2)}$ is
\[
	Q_{(1,1,2,2)} = \alpha_{P_{(1,1,2,2)}} M_{(1,1,2,2)}^t M_{\overline{(1,1,2,2)}}       
\]
where $M_{\overline{(1,1,2,2)}} $ is obtained from $M_{(1,1,2,2)}$ by interchanging the first two rows with the last two rows. 

We compute $\alpha_{P_{(1,1,2,2)}}$ by evaluating $Q(\psi_1, \psi_1)$ in two different ways: firstly, by using Looijenga's formula in Proposition \ref{pLattice}, and secondly, by Lemma \ref{lQuadratic}. One finds
\[
\alpha_{P_{(1,1,2,2)}}= \frac{1}{72}.
\]
Direct computation gives the above matrix for $Q_{(1,1,2,2)}$ and its determinant. The existence of $L'_{(1,2,1,2)}$ and $L'_{(1,2,2,1)}$ with the analogous bases follows by symmetry.
\end{proof}

\begin{proposition}
\label{pDecomposition}
Let
\[
T'_X = L'_{(1,1,3,3)}\oplus L'_{(1,1,2,2)} \oplus L'_{(1,2,1,2)} \oplus L'_{(1,2,2,1)}.
\]
Then we have an equality of lattices $T'_X=T_X$. 
In particular, always $L'_{\beta} = L_{\beta}$.
\end{proposition}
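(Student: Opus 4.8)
The plan is to prove $T'_X = T_X$ by a discriminant (index) comparison, exploiting that $T'_X$ is by construction a full-rank sublattice of $T_X$. First I would record the chain of inclusions. Each $L'_\beta$ sits inside $L_\beta$, and each $L_\beta$ consists of elements of $L$ supported on $Z_\beta \subseteq Z_T$, so $L_\beta \subseteq T_X$ by Proposition \ref{pAlgebraic}. The four orbits $Z_\beta$ are disjoint with union $Z_T$; moreover each $Z_\beta$ is stable under complex conjugation (for instance $\overline{P_{(1,1,1,3)}} = P_{(5,5,5,3)} \in Z_{(1,1,1,3)}$), so by the formula $Q(\varphi,\psi) = \sum_P \alpha_P\,\varphi(P)\psi(\bar P)$ of Lemma \ref{lQuadratic} the $L_\beta$ are mutually orthogonal and the sum is an orthogonal direct sum. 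Since $\mathrm{rk}\,L'_\beta = \mathrm{rk}\,L_\beta = |Z_\beta|$ and $8+4+4+4 = 20 = \mathrm{rk}\,T_X$, I obtain
\[
T'_X = \bigoplus_\beta L'_\beta \ \subseteq\ \bigoplus_\beta L_\beta \ \subseteq\ T_X ,
\]
an inclusion of full-rank lattices. The standard relation $\det T'_X = [T_X : T'_X]^2\,\det T_X$ then reduces the claim to an equality of discriminants.

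Next I would compute $\det T'_X$. By the orthogonality just noted, the Gram matrix of $T'_X$ is block diagonal, so by Propositions \ref{pBasis} and \ref{pBasis2}
\[
\det T'_X = \det Q_{(1,1,1,3)}\cdot \det Q_{(1,1,2,2)}\cdot \det Q_{(1,2,1,2)}\cdot \det Q_{(1,2,2,1)} = 2^{16}3^{12}\cdot\bigl(2^{8}3^{6}\bigr)^{3} = 2^{40}3^{30}.
\]
It then remains to establish $\det T_X = 2^{40}3^{30}$ as well; granting this, $[T_X:T'_X]^2 = 1$, hence $T'_X = T_X$, and the squeeze above forces $\bigoplus_\beta L_\beta = T_X$ and $L'_\beta = L_\beta$ for every $\beta$.

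To compute $\det T_X$ I would work entirely inside Looijenga's explicit model $L = \Z[u_0,\dots,u_3]/I_6$ of Proposition \ref{pLattice}. Concretely: produce an honest $\Z$-basis of $T_X = \{\varphi \in L : \mathrm{supp}(\varphi) \subseteq Z_T\}$ by saturating the $\Z$-span of suitable monomials $u^K$ supported on $Z_T$, assemble the Gram matrix via the ``coefficient of $1$'' formula of Proposition \ref{pLattice}, and evaluate the determinant, obtaining $2^{40}3^{30}$. (As an independent sanity check, since $H^2(X_6,\Z)$ is unimodular and $T_X$ is the orthogonal complement in $H^2$ of the full algebraic lattice, $|\det T_X|$ equals the discriminant of the N\'eron--Severi lattice of $X_6$.) This is exactly the verification carried out by the Macaulay2 script.

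The main obstacle is this last step, the independent determination of $\det T_X$. The difficulty is conceptual as much as computational: the idempotent projections of $L\otimes\C$ onto the eigenspaces attached to the orbits $Z_\beta$ are defined only over $\Q(\zeta)$ — only their $\Gamma$-invariant combinations descend to $\Q$ — so there is no formal reason that the summands $L_\beta$ are saturated in $T_X$, nor that their direct sum exhausts $T_X$. A priori the bound above only gives that $[T_X:T'_X]$ divides $2^{20}3^{15}$, and ruling out every nontrivial $2$- and $3$-power contribution to the index is precisely what the discriminant computation accomplishes. This is the reason the argument is pushed through on the computer rather than by hand.
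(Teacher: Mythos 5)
Your proposal is correct and shares the paper's overall strategy---exhibit $T'_X$ as a full-rank sublattice of $T_X$ and show the index is $1$ using $\det T'_X = 2^{40}3^{30}$---but the decisive step is carried out differently. The paper never computes $\det T_X$: it observes that any proper finite-index overlattice $N \supsetneq T'_X$ inside the ambient lattice $L$ would have index divisible by $2$ or $3$ (since $[N:T'_X]^2$ divides $2^{40}3^{30}$), hence would force the chosen basis vectors of $T'_X$ to become linearly dependent modulo $2$ or modulo $3$ in $L$; the Macaulay2 script checks that they do not, so $T'_X$ is saturated in $L$ and therefore equals $T_X$. You instead propose to compute a $\Z$-basis of $T_X$ itself and its Gram determinant, and conclude via $\det T'_X = [T_X:T'_X]^2\det T_X$. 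That route is legitimate---$T_X$ is the kernel of evaluation at $Z_A$, hence saturated in $L$ and computable---but it is the heavier computation, and there is one slip in your description: no monomial $u^K$ is supported on $Z_T$ (monomials vanish nowhere on $Z$), so one must instead saturate the span of genuine lattice vectors such as the $\varphi_j$ and $\psi_j$, at which point one is essentially performing the paper's saturation check anyway. Your parenthetical ``sanity check'' via unimodularity of $H^2(X_6,\Z)$ is also circular here, since the discriminant of the N\'eron--Severi lattice is not independently known (cf.\ Remark~\ref{rDisciminant}, which derives it from this very proposition). Your preliminary reductions---conjugation-stability of each $Z_\beta$, mutual orthogonality of the $L_\beta$ via Lemma~\ref{lQuadratic}, the rank count $8+4+4+4=20$, and the block-diagonal determinant $2^{16}3^{12}\cdot(2^{8}3^{6})^{3}=2^{40}3^{30}$---are all correct and consistent with the paper.
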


\begin{proof}
It is clear that $T'_X$ is a sublattice of $T_X$ of finite index. 
Consider the basis of $T'_X$ consisting of the union of the basis vectors of the $L'_{\beta}$ constructed above. One can check  that the reductions of the vectors of this basis modulo $2$ and $3$ are still linearly independent. Since the discriminant of $T'_X$ is only divisible by primes $2$ and $3$, this proves that there is no sublattice of $L$ which contains $T'_X$ as a proper sublattice of finite index. In particular, $T_X = T_X'$.
\end{proof}

This completes the proof of Theorem \ref{tMain}.

\begin{remark}
\label{rDisciminant}
Proposition \ref{pDecomposition} implies also that the discriminant of the transcendental lattice $T_X$ is $2^{40} 3^{30}$ and consequently the discriminant of the Picard lattice is $-2^{40} 3^{30}$ (the sign is negative since the signature of $Q$ on $H^{1,1}$ is $(1, 85)$). We could not find this number in the literature.
\end{remark}

\begin{remark}
\label{rReally}
We found the matrices $M_{(1,1,1,3)}$ and $M_{(1,1,2,2)}$ as follows: using Proposition \ref{pHodge} we find a $\mathbb{Q}$-basis of $L_{(1,1,1,3)}$ and of $L_{(1,1,2,2)} \oplus L_{(1,2,1,2)} \oplus L_{(1,2,2,1)}$. Clearing denominators, we find vectors in the lattice  $L$ that form a basis over $\mathbb{Q}$ of $L_{(1,1,1,3)}$ and of $L_{(1,1,2,2)} \oplus L_{(1,2,1,2)} \oplus L_{(1,2,2,1)}$, respectively. These vectors generate lattices $M$ and $N$, which are not saturated, however. For each prime $p$ dividing the discriminant of $M$, for example, we reduce a set of basis vectors mod $p$ in the ambient $L$, and if the reductions happen to become linearly dependent, we lift the dependency relation to $\mathbb{Z}$ and find a vector divisible by $p$. Continuing in this way we arrive at a saturated sublattice $M'$ spanning the same $\mathbb{Q}$-subspace as $M$.  Using the LLL-algorithm we find vectors in $M'$ with small coefficients. Among these we choose one with small length; evaluating this on $Z_{(1,1,1,3)}$ gives the first column of $M_{(1,1,1,3)}$. The remaining columns are obtained using the $\mathfrak{S}_4$-symmetry and conjugation.
\end{remark}

%\begin{remark}
%\label{rTruthfully}
%With the aim to prove that a very general cubic fourfold is
%irrational, in Conjecture~\ref{cKulikovOld}, one should really only
%consider surfaces $S$ whose transcendental lattice $T_S$ can actually
%contain the transcendental lattice of a very general cubic fourfold as
%a direct summand.  In particular, the rank of the $(1,1)$-part of
%$T_S$ must be at least 19 (so rules out the sextic Fermat surface).
%\end{remark}

\section{Rigidity and transcendental lattice decompositions}
\label{sNewIrrationality}

By a family of surfaces $\pi : \mathscr{S} \to B$, we mean a flat
surjective morphism of schemes or analytic spaces, all of whose fibers
are projective surfaces.  All families considered will actually have
smooth fibers, so that $\pi$ is even a smooth map. A family is
isotrivial if the fibers $\mathscr{S}_b$ for $b \in B(\C)$ are all
isomorphic, equivalently, the family is locally trivial for the
\'etale topology, equivalently, the classifying map to the coarse
moduli space maps to a closed point (if it is defined for the
isomorphism class of the fiber).  If $B$ is a scheme or analytic
space, then by a ``very general" point, we mean any point outside of a
countable union of proper analytic subsets.

\begin{conjecture}
\label{cKulikovStrong}
Let $\pi : \mathscr{S} \to B$ be a family of surfaces over
an analytic space $B$ such that a very general fiber $\mathscr{S}_b$
has decomposable integral polarized Hodge structure on the
transcendental lattice.  Then the family $\mathscr{S} \to B$ is
isotrivial.
\end{conjecture}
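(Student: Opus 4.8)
The plan is to pass to the polarized variation of Hodge structure (VHS) carried by the middle cohomology of the fibers, to promote the hypothesized pointwise decomposition into a global decomposition of VHS, and then to attempt to force the period map to be constant. After shrinking $B$ to a dense open subset on which the Hodge numbers are locally constant, $R^2\pi_*\Z$ modulo torsion underlies a polarized integral VHS $\mathbb{H}$ of weight $2$ on $B$. Let $\mathcal{A}\subset\mathbb{H}$ be the sub-VHS generated by the monodromy-invariant classes; by Deligne's theorem of the fixed part it is a constant sub-VHS of type $(1,1)$, hence algebraic on a very general fiber, and I set $\mathcal{T}=\mathcal{A}^{\perp}$. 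For a very general $b_0\in B$ the fiber of $\mathcal{T}$ is exactly $\Trans_{\mathscr{S}_{b_0}}$. The hypothesis provides an orthogonal splitting $\Trans_{\mathscr{S}_{b_0}}=T_1\oplus T_2$ into nontrivial integral polarized sub-Hodge structures, equivalently a pair of integral, $Q$-self-adjoint idempotent Hodge endomorphisms $p_1,p_2=\id-p_1$ of $\Trans_{\mathscr{S}_{b_0}}$.

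The first substantive step is to globalize this splitting. The $p_i$ are Hodge classes of type $(0,0)$ in the weight-$0$ VHS $\mathcal{E}nd(\mathcal{T})$. By the algebraicity of Hodge loci (Cattani--Deligne--Kaplan) the locus in $B$ where $\mathcal{E}nd(\mathcal{T})$ acquires extra Hodge classes is a countable union of proper analytic subvarieties; since $b_0$ is very general it avoids all of these, so the $p_i$ lie in the generic Hodge endomorphism algebra, and the theorem of the fixed part together with semisimplicity of the monodromy on a polarized VHS shows the $p_i$ are flat. They therefore extend to a decomposition of polarized integral VHS $\mathcal{T}=\mathcal{T}_1\oplus\mathcal{T}_2$ over all of $B$, with $(\mathcal{T}_i)_{b_0}=T_i$. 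Each $\mathcal{T}_i$ has nonzero $(2,0)$-part: a nonzero rational sub-Hodge structure of pure type $(1,1)$ inside a transcendental lattice would be algebraic by the Lefschetz theorem on $(1,1)$-classes, contradicting transcendentality. Thus $H^{2,0}(\mathscr{S}_b)=H^{2,0}_1\oplus H^{2,0}_2$ with both summands nonzero, and in particular $p_g(\mathscr{S}_b)\ge 2$ throughout the family.

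It remains to deduce that the period map is constant, after which an appropriate (infinitesimal) Torelli statement for the surfaces at hand would give isotriviality. \emph{This last deduction is the heart of the matter and the expected main obstacle.} A direct-sum decomposition of VHS does not by itself rigidify the period map: the subspace $H^{2,0}_i$ may still move inside the fixed lattice $T_i\otimes\C$, so each factor can vary independently in its own period domain, and an integral orthogonal splitting—although rigid as lattice data—does not pin down the Hodge filtration. The route I would pursue is infinitesimal: by Griffiths transversality the Gauss--Manin connection sends $H^{2,0}_i$ into $H^{1,1}_i$ via cup product with the Kodaira--Spencer class, and one would try to show that the presence of the fixed integral splitting, together with the fact that $\mathcal{T}$ is cut out inside the cohomology of an honest projective surface (so that the governing multiplication maps are constrained by the surface's Jacobian-ring structure), forces this first-order variation, and hence the Kodaira--Spencer map, to vanish in every tangent direction. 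Carrying out this compatibility argument for an \emph{arbitrary} family—rather than for the special universal families where big monodromy and irreducibility of $\mathcal{T}$ are classically available, and which would immediately preclude any decomposition—is precisely where geometric input beyond formal Hodge theory seems to be required, and is the reason the statement is posed as a conjecture.
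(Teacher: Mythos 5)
The statement you are trying to prove is Conjecture~\ref{cKulikovStrong}, which the paper does not prove and does not claim to prove: it is an open conjecture (a strengthening of Kulikov's original one), and the entire content of Section~\ref{sNewIrrationality} is only that this conjecture \emph{implies} irrationality of the very general cubic fourfold. So there is no ``paper's own proof'' to compare against, and your proposal is not a proof either --- as you yourself say explicitly, the essential deduction (that the global splitting of the transcendental variation of Hodge structure forces the period map to be constant) is left as ``the heart of the matter.'' Everything before that point is preparatory reduction; the conjecture's actual content is exactly the step you do not carry out. A correct response to this exercise is to recognize that the statement is conjectural and cannot currently be proved, which your final paragraph effectively concedes.

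Two further remarks on the preparatory steps. First, your identification of the monodromy-invariant sub-VHS $\mathcal{A}$ with the generic algebraic lattice is not correct: the fixed part of the monodromy representation need not be of type $(1,1)$ (for an isotrivial or nearly constant family the whole of $H^2$ can be monodromy-invariant while $p_g>0$), so $\mathcal{A}^{\perp}$ need not be the transcendental lattice of a very general fiber; the correct construction of the generic transcendental part is via the complement of the countably many Noether--Lefschetz loci, as in Step~3 of the paper's proof of Lemma~\ref{lCountableFamily}. Second, your strategy of globalizing the projectors via Cattani--Deligne--Kaplan and the fixed-part theorem is reasonable, but even granting a global decomposition $\mathcal{T}=\mathcal{T}_1\oplus\mathcal{T}_2$ of polarized integral VHS with both $(2,0)$-parts nonzero, there is no known mechanism forcing the Kodaira--Spencer/cup-product maps to vanish: each summand can a priori vary in its own period domain, and no Torelli-type or infinitesimal rigidity statement of the required generality is available. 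That is precisely why the statement remains a conjecture, and why the paper instead proves only the conditional results Theorems~\ref{tStrongImpliesIrrational} and~\ref{tStrongCountable}.
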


The main result of this section is the following.

\begin{theorem}
\label{tStrongImpliesIrrational}
Conjecture~\ref{cKulikovStrong} implies that the very general cubic
fourfold $X$ is irrational.
\end{theorem}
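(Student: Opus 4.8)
The plan is to argue by contradiction: assuming the very general cubic fourfold is rational, I will produce a non-isotrivial family of surfaces whose very general member has decomposable transcendental IPHS, contradicting Conjecture~\ref{cKulikovStrong}. Let $U$ be the irreducible parameter space of smooth cubic fourfolds, with universal family $\mathcal X \to U$. The closures of graphs of birational maps $\P^4 \dasharrow X_b$ are parametrized by a Hilbert scheme of subschemes of $\P^4 \times \P^5$ over $U$, which has only countably many irreducible components. Among those whose general member is the graph of a birational map, one must dominate $U$: otherwise $U$ would be a countable union of proper closed subsets (those images, together with the non-rational locus), which is impossible for an irreducible variety over $\C$ whose very general member is rational. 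Choosing a subvariety $B$ of such a component mapping dominantly and generically finitely to $U$, and passing to a dense open, I obtain a family of birational maps $\P^4 \times B \dasharrow \mathcal X_B$. Resolving the birational map over the generic point $\Spec \C(B)$ by a sequence of blow-ups along smooth centers and spreading the resolution out, I get over a dense open $B' \subseteq B$ a simultaneous resolution $\mathcal X'_{B'} \to \mathcal X_{B'}$ with $\mathcal X'_{B'} \to \P^4 \times B'$ a sequence of blow-ups along smooth centers flat over $B'$. Each surface-center of $p_g \ge 1$ thus spreads out to a family $\pi_j \colon \mathcal S_j \to B'$ of smooth projective surfaces; since $B' \to U$ is dominant, a very general $b \in B'$ gives a very general cubic fourfold $X_b$.

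Over $B'$ the blow-up formula realizes $T_{\mathcal X'_b} = \bigoplus_j T_{\mathcal S_{j,b}}(-1)$ as a decomposition of variations of Hodge structure into fixed local summands, while the resolution gives $T_{\mathcal X'_b} = f_b^* T_{X_b} \oplus (f_b^* T_{X_b})^\perp$. Kulikov's Lemmas~2 and~3 \cite{kulikov:cubic}, applied to the very general fiber, yield an index $j_0$ with $T_{\mathcal S_{j_0,b}} = f_b^* T_{X_b}(1) \oplus T'_b$ and $T'_b$ nontrivial, so $T_{\mathcal S_{j_0,b}}$ is decomposable. Because $f_\bullet^* T_{X_\bullet}$ is a local subsystem and the $T_{\mathcal S_j}(-1)$ are fixed summands, the inclusion $f_b^* T_{X_b}(1) \subseteq T_{\mathcal S_{j_0,b}}$ is locally constant in $b$, so $j_0$ is constant on the connected base $B'$. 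Hence $\mathcal S_{j_0} \to B'$ is a family of smooth projective surfaces whose very general fiber has decomposable transcendental IPHS, and Conjecture~\ref{cKulikovStrong} forces it to be isotrivial.

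To contradict this I would use that the summand $f_b^* T_{X_b}(1)$ exhibits $T_{X_b}(1)$ as a local subsystem $\mathbb W$ of $R^2(\pi_{j_0})_* \Q$ over the very general locus of $B'$. If $\mathcal S_{j_0} \to B'$ were isotrivial, then after a finite \'etale base change $B'' \to B'$ the family would become trivial, so $R^2(\pi_{j_0})_*\Q$ would pull back to a constant variation of Hodge structure; a sub-variation of a constant variation is itself constant, whence $\mathbb W$, and therefore $T_{X_b}$, would be constant along $B''$. This makes the period map of cubic fourfolds constant on the dominant family $B'' \to U$, contradicting the non-isotriviality of the universal family of cubic fourfolds, i.e.\ that $T_{X}$ varies nontrivially along any positive-dimensional subfamily of $U$. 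This contradiction shows the very general cubic fourfold is irrational.

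The main obstacle is the spreading-out in the first step: converting the existence of a resolution for very general members into an honest family over a positive-dimensional base. This requires the countability of components of the Hilbert scheme, the fact that being the graph of a birational map is an open condition, resolution of the generic birational map by smooth centers, and genericity arguments ensuring the centers remain smooth and flat and the fiberwise morphism remains birational over a dense open of $B$. A subsidiary point, also resting on the local-system structure, is the constancy of the index $j_0$. The remaining Hodge-theoretic steps — decomposability via Kulikov and the monodromy contradiction — are comparatively formal.
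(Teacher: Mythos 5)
Your argument is correct in outline, but it takes a genuinely different route from the paper's. The paper never spreads the hypothetical birational maps out into a family: it applies Kulikov's lemmas from \cite{kulikov:cubic} pointwise and reduces Theorem~\ref{tStrongImpliesIrrational} to the countability statement of Theorem~\ref{tStrongCountable} --- Conjecture~\ref{cKulikovStrong} forces the set of isomorphism classes of surfaces with decomposable transcendental IPHS to be countable, so the uncountably many pairwise non-isomorphic $T_X$ of cubic fourfolds cannot all occur as proper summands of such $T_S$. The real work there is proving countability, via a stratification of the period domain by decomposability loci (Lemmas~\ref{lLinearAlgebra} and~\ref{lFlags}) applied to the universal families over Hilbert schemes, together with a Noetherian induction on the dimension of the base. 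You instead assume the very general cubic fourfold is rational, spread the resolutions out over a base dominating the moduli space, extract a single family of surfaces whose very general fiber has decomposable $T_S$, invoke Conjecture~\ref{cKulikovStrong} to get isotriviality, and contradict local Torelli for cubic fourfolds. What your approach buys is that it bypasses the period-domain stratification entirely; what it costs is the spreading-out step (countably many Hilbert scheme components, constructibility of ``being the graph of a birational map,'' simultaneous resolution over a dense open of the base) and the constancy of $j_0$, for which your local-system argument additionally needs that the very general locus of $B'$ is connected and that the various transcendental lattices are locally constant over it --- both true, but worth spelling out, since a priori one only gets that for \emph{each} very general $b$ \emph{some} $j_0(b)$ works. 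Your final contradiction also imports facts the paper does not need: that isotrivial implies \'etale-locally trivial (built into the paper's definition), that a sub-variation of a constant variation is constant, and that the period map of cubic fourfolds is non-constant on positive-dimensional subvarieties. Both arguments are valid; the paper's has the advantage of producing the independently interesting Theorem~\ref{tStrongCountable} as a byproduct.
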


Since the integral polarized Hodge structures on the transcendental
parts $T_X$ of the middle cohomology of cubic fourfolds $X$ are
uncountably many, and, moreover, in \cite{kulikov:cubic} it is proved
that, if such $X$ were rational, their $T_X$ must occur as
\emph{proper} summands of $T_S$, for $S$ a surface, it suffices to
prove the following.

\begin{theorem}
\label{tStrongCountable}
Conjecture~\ref{cKulikovStrong} implies that there are only countably many isomorphism classes of surfaces $S$ with decomposable integral polarized Hodge structure on $T_S$.
\end{theorem}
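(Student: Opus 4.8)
The plan is to combine a boundedness argument with the rigidity hypothesis of Conjecture~\ref{cKulikovStrong}, organized as a Noetherian induction over finite-type parameter spaces. First I would reduce to countably many bounded moduli problems. Every smooth projective surface $S$ carries a very ample line bundle, hence embeds in some $\P^N$ with some Hilbert polynomial $P$; for fixed $(N,P)$ the Hilbert scheme $\mathrm{Hilb}^P(\P^N)$ is of finite type over $\C$, and the locus parametrizing smooth surfaces is a constructible subscheme, which I may take reduced and stratify into finitely many irreducible finite-type pieces. Letting $(N,P)$ range over the countable index set $\mathbb{N}\times\Q[t]$, I obtain countably many smooth families $\pi_i:\mathscr{S}_i\to B_i$ with $B_i$ irreducible of finite type, such that every isomorphism class of smooth projective surface occurs as some fiber. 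It therefore suffices to bound, for a single such family $\pi:\mathscr{S}\to B$, the number of isomorphism classes of fibers $\mathscr{S}_b$ whose transcendental IPHS $\Trans_{\mathscr{S}_b}$ is decomposable, and to do so by induction on $\dim B$.

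For the inductive step, fix $\pi:\mathscr{S}\to B$ with $B$ irreducible and consider the decomposability locus $B^{\mathrm{dec}}=\{b\in B(\C):\Trans_{\mathscr{S}_b}\text{ is decomposable}\}$. The key structural input is that $B^{\mathrm{dec}}$ is a countable union $\bigcup_\beta Y_\beta$ of closed algebraic subvarieties of $B$. Indeed, a nontrivial decomposition $\Trans_{\mathscr{S}_b}=\Trans'\oplus\Trans''$ is recorded by a rational Hodge class in $\mathrm{End}\big(H^2(\mathscr{S}_b,\Q)\big)$, namely the self-adjoint idempotent projecting onto $\Trans'$, extended by zero on the algebraic part $A_{\mathscr{S}_b}=\Trans_{\mathscr{S}_b}^\perp$; this class is supported on the transcendental part and is distinct from $0$ and from the projector onto $\Trans_{\mathscr{S}_b}$, and conversely any such class splits $\Trans_{\mathscr{S}_b}$. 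Thus $b\in B^{\mathrm{dec}}$ forces $b$ into the Hodge locus of the weight-zero variation $\mathrm{End}(R^2\pi_*\Q)$, which is a countable union of closed algebraic subvarieties by the algebraicity theorem of Cattani--Deligne--Kaplan; imposing on each such component the further closed conditions that a fixed persisting rational class be a nonzero self-adjoint idempotent orthogonal to the generic algebraic classes exhibits $B^{\mathrm{dec}}$ itself as a countable union of closed algebraic subvarieties $Y_\beta$.

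Now I split into two cases. If the very general fiber of $\pi$ has decomposable $\Trans$, then Conjecture~\ref{cKulikovStrong} applies directly and forces $\pi$ to be isotrivial, so all fibers are isomorphic and the family contributes a single isomorphism class. Otherwise the very general point of $B$ lies outside $B^{\mathrm{dec}}$; since an irreducible complex variety is never a countable union of proper closed subvarieties, and $\bigcup_\beta Y_\beta=B^{\mathrm{dec}}\neq B$, every component $Y_\beta$ must be a \emph{proper} subvariety, whence $\dim Y_\beta<\dim B$. Restricting $\pi$ to each $Y_\beta$ and invoking the inductive hypothesis, the fibers over each $Y_\beta$ give countably many isomorphism classes with decomposable $\Trans$; as there are countably many $\beta$ and every $b\in B^{\mathrm{dec}}$ lies over some $Y_\beta$, the total remains countable. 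The base case $\dim B=0$ is trivial, and running the induction over the countably many families $\pi_i$ yields the theorem.

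I expect the main obstacle to be the structural claim that $B^{\mathrm{dec}}$ is a countable union of closed algebraic subvarieties. The delicate point is that the transcendental lattice $\Trans_{\mathscr{S}_b}$ is \emph{not} the fiber of a local system---its rank jumps precisely along the Noether--Lefschetz loci where the algebraic lattice $A_{\mathscr{S}_b}$ grows---so decomposability of $\Trans$ cannot be read off from a fixed sub-variation of Hodge structure. One must instead detect it through idempotent Hodge endomorphisms of $H^2$ that are transcendentally supported, and verify that the conditions ``idempotent, self-adjoint, nonzero, orthogonal to the generic algebraic classes'' carve out algebraic subloci of each component of the ambient Hodge locus. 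This is where the algebraicity theorem of Cattani--Deligne--Kaplan, together with a careful bookkeeping of how $A_{\mathscr{S}_b}$ varies across the base, does the real work.
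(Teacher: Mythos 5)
Your overall architecture matches the paper's: reduce to countably many Hilbert-scheme families, show that the decomposability locus in each base is a countable union of algebraic subvarieties, apply Conjecture~\ref{cKulikovStrong} to the pieces whose very general fiber is decomposable, and handle the remaining loci by induction on the dimension of the base. Where you genuinely diverge is in the key structural step. The paper proves it by hand (Lemmas~\ref{lLinearAlgebra} and~\ref{lFlags}): after locally trivializing $R^2\pi_*\Z$ and fixing the generic algebraic sublattice $M$, it enumerates the countably many decompositions of the \emph{fixed integral lattice} $M^{\perp}$ into two summands and shows by an elementary flag-variety argument that compatibility of the Hodge filtration with a fixed decomposition is a locally closed algebraic condition on the compact dual of the period domain; only holomorphy of the period map and analyticity of Hodge loci are used (the paper explicitly notes that Cattani--Deligne--Kaplan is not needed). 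You instead encode a decomposition by a self-adjoint idempotent Hodge class in $\mathrm{End}(H^2(\mathscr{S}_b,\Q))$ and invoke CDK.

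That substitution is where the gap lies. A rational self-adjoint idempotent $e$ supported on the transcendental part detects decomposability of the polarized Hodge structure \emph{over $\Q$}, not over $\Z$: it yields $\Trans_{\mathscr{S}_b}\otimes\Q = e(\Trans_{\mathscr{S}_b}\otimes\Q)\oplus(1-e)(\Trans_{\mathscr{S}_b}\otimes\Q)$, but the lattice $\Trans_{\mathscr{S}_b}$ need not be the direct sum of its intersections with the two rational pieces. Since $\Q$-decomposability is ubiquitous --- the paper stresses right after Conjecture~\ref{cKulikovOld} that integrality is the essential point and that counterexamples over $\Q$ abound --- your claimed equality $\bigcup_\beta Y_\beta=B^{\mathrm{dec}}$ is really only the containment $B^{\mathrm{dec}}\subset\bigcup_\beta Y_\beta$. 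The subsequent deduction ``every component $Y_\beta$ must be proper'' then fails: a $Y_\beta$ can be all of $B$ because the very general fiber is $\Q$- but not $\Z$-decomposable, and the dimension induction does not start. The repair is to build integrality into the loci from the outset: on each stratum where the local system is trivialized and the algebraic lattice is constant, $\Trans$ is a fixed lattice admitting only countably many integral decompositions, and requiring the Hodge filtration to respect a fixed one is an algebraic (locally closed, not closed --- this matters because of the Noether--Lefschetz jumping you yourself flag) condition; this is precisely what the paper's Lemma~\ref{lLinearAlgebra} accomplishes. As written, your proposal defers exactly this bookkeeping to its final paragraph without carrying it out, so the central structural claim remains unproved.
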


Remark that surfaces with $p_g \le 1$ have indecomposable
$T_S$, so one can restrict to surfaces with $p_g \ge 2$.

\begin{proof}[Proof of Theorem \ref{tStrongCountable}] We divide the proof
into a series of technical steps.

\medskip

\textbf{Step 1}. There are countably many
families $\mathscr{S}_i \to B_i$ of projective surfaces over
irreducible base varieties $B_i$, such that every isomorphism class of a projective
surface is represented by a fiber of some such family. One can take for example the universal families over the Hilbert schemes of two-dimensional subschemes of $\mathbb{P}^n$, $n \in \mathbb{N}$, since there are only countably many Hilbert polynomials. 

Hence it suffices to prove the following.

\begin{lemma}
\label{lCountableFamily}
Let $\pi : \mathscr{S} \to B$ be a family of surfaces
over an analytic space $B$.  Then there are a countable number of
isomorphism classes of fibers $[\mathscr{S}_b]$, for $b\in B(\C)$,
such that the integral polarized Hodge structure on
$T_{\mathscr{S}_b}$ is decomposable.
\end{lemma}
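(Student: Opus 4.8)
The plan is to show that the \emph{decomposability locus} $B_{\mathrm{dec}} = \{b \in B(\C) : T_{\mathscr{S}_b}\text{ is decomposable}\}$ meets only countably many isomorphism classes, by covering it with countably many analytic loci over each of which the very general fiber has decomposable transcendental lattice, and then invoking Conjecture~\ref{cKulikovStrong} to force isotriviality — hence a single isomorphism class — on each locus. Since a complex analytic space is second countable, I would first cover $B$ by countably many connected, simply connected open subsets $U$ over which the local system $R^2\pi_*\Z$ modulo torsion is trivialized by a fixed polarized lattice $(L,Q)$, with the Hodge bundle $F^2 = H^{2,0}$ varying as a holomorphic subbundle of $L\otimes\OO_U$ via the period map. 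It then suffices to produce countably many classes over each such $U$.

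Next I would exploit that $L$, being a finitely generated free abelian group, has only countably many sublattices. For a sublattice $N\subseteq L$ the condition $N\subseteq H^{1,1}_b$ is the vanishing at $b$ of the finitely many holomorphic functions $b\mapsto Q(\gamma,\omega_b)$ (for $\gamma$ ranging over a basis of $N$ and $\omega_b$ a holomorphic frame of $F^2_b$), so the Hodge locus $Y_N = \{b\in U : N\subseteq H^{1,1}_b\}$ is a closed analytic subset. By the definition of the algebraic lattice, $A_{\mathscr{S}_b} = L\cap H^{1,1}_b$, so $b\in Y_N \iff N\subseteq A_{\mathscr{S}_b}$, and $U$ is partitioned into the countably many strata $U_N = Y_N\setminus\bigcup_{N'\supsetneq N}Y_{N'}$ on which $A_{\mathscr{S}_b}=N$ and hence $T_{\mathscr{S}_b}$ has the fixed underlying lattice $T_N := N^\perp$ (note $F^2_b\subseteq (T_N)_\C$ whenever $b\in Y_N$). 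Similarly, for a sublattice $M\subseteq L$ the condition that $M$ be a sub-Hodge-structure at $b$, namely $F^2_b = (F^2_b\cap M_\C)\oplus(F^2_b\cap(M^\perp)_\C)$, is a closed analytic condition on the position of the moving subspace $F^2_b$, cutting out an analytic set $Y^{\mathrm{sub}}_M\subseteq U$.

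I would then observe that for $b\in U_N$ the lattice $T_{\mathscr{S}_b}=T_N$ is decomposable precisely when there is an orthogonal splitting $T_N = M_1\oplus M_2$ into nonzero sublattices that are both sub-Hodge-structures at $b$. As there are countably many sublattices $M_1\subseteq L$ (each determining $M_2 = M_1^\perp\cap T_N$), and countably many $N$, the set $B_{\mathrm{dec}}\cap U$ is the countable union, over triples $(N,M_1,M_2)$ with $M_1\oplus M_2 = T_N$ orthogonal and nonzero, of the sets $U_N\cap Y^{\mathrm{sub}}_{M_1}\cap Y^{\mathrm{sub}}_{M_2}$. For each such triple I pass to the (countably many) irreducible components $C$ of the closed analytic set $Y_N\cap Y^{\mathrm{sub}}_{M_1}\cap Y^{\mathrm{sub}}_{M_2}$. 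Every $b\in B_{\mathrm{dec}}\cap U$ lies on such a component $C$; since $A_{\mathscr{S}_b}=N$ while $A\supseteq N$ holds on all of $C$, for each $N'\supsetneq N$ the analytic set $C\cap Y_{N'}$ is proper (it misses $b$), so the algebraic lattice equals $N$ outside a countable union of proper analytic subsets of $C$. Hence the very general fiber over $C$ has $T = T_N = M_1\oplus M_2$ with both summands sub-Hodge-structures, i.e. decomposable transcendental lattice. Conjecture~\ref{cKulikovStrong} applied to $\mathscr{S}|_C\to C$ forces this family to be isotrivial, so all its fibers, in particular $\mathscr{S}_b$, lie in a single isomorphism class. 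Running over the countably many $U$, triples $(N,M_1,M_2)$, and components $C$ yields countably many classes covering $B_{\mathrm{dec}}$.

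The main obstacle is the bookkeeping that reconciles the conjecture's ``very general fiber'' hypothesis with the fact that the transcendental lattice jumps: a priori one controls decomposability only on the constructible strata $U_N$, not on a closed analytic set to which the conjecture can be applied. The device that resolves this is to work on the closure $Y_N\cap Y^{\mathrm{sub}}_{M_1}\cap Y^{\mathrm{sub}}_{M_2}$ and its irreducible components, using that a component containing a point with $A_{\mathscr{S}_b}=N$ has generic algebraic lattice exactly $N$ (it contains $N$ everywhere and cannot be strictly larger at that point), which is precisely what makes the very general fiber decomposable. The remaining inputs — the analyticity of the Hodge loci $Y_N$ and $Y^{\mathrm{sub}}_M$ — are standard consequences of the holomorphicity of the period map, as for Noether--Lefschetz loci, and I would cite these rather than reprove them.
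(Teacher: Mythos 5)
Your proof is correct and rests on the same engine as the paper's: cover the decomposability locus by countably many irreducible analytic subvarieties of the base, on each of which the very general fiber has decomposable transcendental lattice, and let Conjecture~\ref{cKulikovStrong} collapse each to a single isomorphism class. The genuine difference lies in how the two arguments cope with the jumping of the algebraic lattice. The paper first passes to the complement $U$ of the countably many Noether--Lefschetz loci, where $A_{\mathscr{S}_b}$ is constant, applies the conjecture to those components of the preimage of the decomposability locus $Z_A$ in the period domain (Lemmas~\ref{lLinearAlgebra} and~\ref{lFlags}) that meet $U$, and then disposes of the jump loci themselves by an induction on the dimension of the base (Step~4). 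You avoid that induction entirely: stratifying by the exact algebraic lattice $N$ and observing that an irreducible component of the closed locus $Y_N\cap Y^{\mathrm{sub}}_{M_1}\cap Y^{\mathrm{sub}}_{M_2}$ through a point where $A=N$ exactly must have $A=N$ at its very general point (it contains $N$ everywhere, and for each $N'\supsetneq N$ it meets $Y_{N'}$ in a proper analytic subset since it misses it at that point), you obtain the hypothesis of the conjecture on each component in a single pass. You also work with the Hodge loci directly in $B$ rather than with locally closed algebraic subsets of the compact dual $\check{D}$ pulled back along the period map; this is an equivalent bookkeeping choice. Two small points to tidy: the condition defining $Y^{\mathrm{sub}}_M$ should be imposed on all Hodge pieces (or one should note that, by conjugation and the second Hodge--Riemann relation on primitive cohomology, splitting $H^{2,0}$ together with the orthogonal lattice splitting already forces the $H^{1,1}$-part to split); and second countability of $B$ should be a standing hypothesis, as the paper implicitly assumes when it covers $B$ by countably many monodromy-free open sets.
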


\medskip

\textbf{Step 2}. We first prove a linear algebra result.  

\begin{lemma}
\label{lLinearAlgebra}
Let $D = \mathrm{SO} (2p, q)/ \mathrm{SO} (2p) \times \mathrm{U}(q)$
be the period domain classifying polarized weight two Hodge structures
on $V:=L\otimes \C$, where $L$ is a fixed lattice. Let $\check{D}$ be
the compact dual of $D$ and $\check{D} = \mathrm{IGrass} (p, V)$, so that $D \subset \check{D}$ is a (classically) open subset.

Let $M \subset L$ be a sublattice of $L$. Let $Z \subset \check{D}$ be the subset such that $M$ is contained in the $H^{1,1}$-part defined by the Hodge filtration corresponding to the points in $Z$ (this is a closed algebraic subset). Then for each $z\in Z \cap D$, we get an induced integral polarized Hodge structure on $M^{\perp} \subset L$, by putting $H^{p,q}_{M^{\perp}} : = H^{p,q} \cap (M^{\perp} \otimes \C )$. Let $Z_M\subset Z\cap D$ be the locus of points $z$ where this induced integral polarized Hodge structure is decomposable. Then $Z_M$ is an intersection with $D$ of countably many locally closed algebraic subvarieties of $\check{D}$. 
\end{lemma}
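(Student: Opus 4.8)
The plan is to turn decomposability into a condition indexed by the (countably many) sublattices of the fixed lattice $M^{\perp}$. By Definition \ref{dPolarizedHodge}(3), a decomposition of the induced integral polarized Hodge structure on $M^{\perp}$ at a point $z$ is an internal orthogonal direct sum $M^{\perp}=N\oplus N'$ into two nonzero sublattices, each of which is a sub-Hodge structure. Since $M^{\perp}$ is a fixed free $\Z$-module of finite rank, it has only countably many sublattices. So it suffices to (i) fix a candidate summand $N$ at a time, and (ii) show that the locus $Y_N\subseteq Z\cap D$ where $N$ \emph{and} its complement underlie sub-Hodge structures is the trace on $D$ of a locally closed algebraic subvariety of $\check{D}$; then $Z_M=\bigcup_N Y_N$ is the trace on $D$ of a countable union of such, which is exactly the assertion.

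\textbf{The countable index set.} First I would set up the indexing. Among all sublattices with $0\ne N\ne M^{\perp}$, I retain only those for which $N':=N^{\perp}\cap M^{\perp}$ satisfies $M^{\perp}=N\oplus N'$ as an orthogonal direct sum; this is a condition on $N$ alone, independent of $z$, leaving a countable family. For such $N$ I set $Y_N:=\{z\in Z\cap D : N_{\C}\ \text{and}\ N'_{\C}\ \text{are both sub-Hodge structures at}\ z\}$. (Requiring both is in fact redundant, since the orthogonal complement of a sub-Hodge structure with respect to the nondegenerate polarization $Q$ is again one; but carrying both conditions avoids any appeal to nondegeneracy of $Q$ on $N$.) By the previous paragraph, $z\in Z_M$ if and only if some $N$ in this family gives a decomposition at $z$, i.e. $Z_M=\bigcup_N Y_N$.

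\textbf{Algebraicity of $Y_N$.} The heart of the matter is to cut out each $Y_N$ algebraically on $\check{D}=\mathrm{IGrass}(p,V)$. There we have the tautological isotropic subbundle $F^2\subseteq V$ of rank $p$ and its orthogonal $F^1=(F^2)^{\perp}$ of rank $p+q$; for any fixed subspace $W\subseteq V$ the integers $\dim_{\C}(W\cap F^2_z)$ and $\dim_{\C}(W\cap F^1_z)$ are upper semicontinuous with determinantal jump loci, so the stratification of $\check{D}$ by this pair of dimensions is by locally closed algebraic subvarieties. The key input is the following criterion, valid at points $z\in D$ precisely because $W$ is defined over $\R$: a real subspace $W$ is a sub-Hodge structure at $z$ if and only if $\dim_{\C}(W\cap F^2_z)+\dim_{\C}(W\cap F^1_z)=\dim_{\C}W$. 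Indeed, projecting $W$ to $V/F^1\cong\overline{F^2_z}=H^{0,2}$ and using $\overline{W}=W$ shows that $\dim_{\C}W\ge\dim_{\C}(W\cap F^2_z)+\dim_{\C}(W\cap F^1_z)$ always holds, with equality exactly when $W=(W\cap H^{2,0})\oplus(W\cap H^{1,1})\oplus(W\cap H^{0,2})$; the conjugate projection to $V/\overline{F^1_z}$ supplies the containment needed to split off the $(1,1)$-part. Hence the constructible set
\[
S_W:=\bigl\{\, z\in\check{D} : \dim_{\C}(W\cap F^2_z)+\dim_{\C}(W\cap F^1_z)\ge\dim_{\C}W \,\bigr\}
\]
satisfies $D\cap S_W=\{z\in D : W\ \text{is a sub-Hodge structure}\}$. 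Applying this to $W=N_{\C}$ and $W=N'_{\C}$ and intersecting with the closed algebraic set $Z$ gives $Y_N=(Z\cap D)\cap S_{N_{\C}}\cap S_{N'_{\C}}$, the trace on $D$ of a finite union of locally closed algebraic subvarieties of $\check{D}$. Taking the union over the countably many $N$ yields the claim. (A minor point, immediate from $N\subseteq M^{\perp}$: being a sub-Hodge structure of the induced structure on $M^{\perp}$ agrees with being one in the ambient structure on $L$, since $H^{p,q}_{M^{\perp}}\cap N_{\C}=H^{p,q}\cap N_{\C}$.)

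\textbf{Main obstacle.} The only genuinely delicate step is the algebraicity of the sub-Hodge-structure condition. A priori this condition refers to the Hodge decomposition and hence to complex conjugation, which is real-analytic and \emph{not} algebraic on $\check{D}$; one cannot directly impose stability of $N_{\C}$ under the Hodge projectors by equations on $\check{D}$. The dimension criterion above is exactly what circumvents this: it rephrases the condition purely in terms of the holomorphic incidence data $\dim_{\C}(W\cap F^2_z)$ and $\dim_{\C}(W\cap F^1_z)$, with the reality of $W$ entering only to pass from the universal inequality to the desired splitting. These ranks are manifestly algebraic on $\check{D}$, so the locus is constructible there and its intersection with $D$ is the required trace. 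Everything else — the reduction to a single summand, the countability of sublattices, and the standard fact that an orthogonal complement of a sub-Hodge structure is a sub-Hodge structure — is formal.
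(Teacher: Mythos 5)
Your proof is correct, and its overall architecture matches the paper's: both arguments first observe that the fixed lattice $M^{\perp}$ admits only countably many candidate splittings into two nontrivial orthogonal summands, and then show that for each fixed splitting the compatibility locus is cut out algebraically on $\check{D}$. Where you diverge is in the key algebraicity step. The paper proves a general flag-variety statement (its Lemma \ref{lFlags}): for an arbitrary fixed decomposition $W=W_1\oplus W_2$ of a subspace of $V$, the locus of flags with $F_i\cap W=(F_i\cap W_1)\oplus(F_i\cap W_2)$ is a finite union of locally closed sets; this is proved by stratifying by $\dim(F_i\cap W)$, mapping $\mathrm{Gr}(a,W_1)\times\mathrm{Gr}(b,W_2)$ into $\mathrm{Gr}(k',W)$, and pushing forward through a relative Grassmannian to land in $\mathrm{Gr}(k,V)$. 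You instead exploit the specific weight-two situation: since $N_{\C}$ and $N'_{\C}$ are complexifications of lattices (hence real) and $F^1=(F^2)^{\perp}$, being a sub-Hodge structure at a point of $D$ is equivalent to the single numerical condition $\dim(W\cap F^2_z)+\dim(W\cap F^1_z)=\dim W$, so the relevant loci are just determinantal jump loci of the tautological bundles, which are manifestly closed algebraic. Your route is more elementary and even yields closed (not merely locally closed) subsets of $\check{D}$, at the cost of using the reality of the summands and the polarization relation between $F^1$ and $F^2$; the paper's Lemma \ref{lFlags} is more general (arbitrary, not necessarily real, $W_1,W_2$ and arbitrary flag varieties) but requires the properness/image argument. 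Both correctly reduce the sub-Hodge condition, which a priori involves the non-algebraic complex conjugation, to algebraic incidence data on $\check{D}$.
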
 

To prove this, note first that a given lattice, $M^{\perp}$ in our
case, can decompose in at most a countable number of ways into two
nontrivial summands. We fix such a decomposition of $M^{\perp}$. We
then have to prove that the points $z\in \check{D}$ such that the
corresponding Hodge filtration decomposes in a way compatible with the
fixed decomposition of $M^{\perp}$, form a finite union of locally
closed algebraic subsets of $\check{D}$. This follows from the following.

\begin{lemma}
\label{lFlags}
Let $F = \mathrm{Fl} (k_1, \dots , k_r; V)$ be the flag variety of flags $(F_1, \dots , F_r)$ of type $(k_1, \dots , k_r)$ in a complex vector space $V$. Let $W\subset V$ be a fixed subspace with a given direct sum decomposition $W= W_1\oplus W_2$ into subspaces $W_1 \subset W$ and $W_2 \subset W$. Then the subset $Z_{W_1, W_2}$ of flags such that
\[
F_i \cap W = (F_i \cap W_1) \oplus (F_i \cap W_2), \quad \forall i , 
\]
is a finite union of locally closed algebraic subsets. 
\end{lemma}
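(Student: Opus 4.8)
The plan is to reduce the set-theoretic compatibility condition defining $Z_{W_1,W_2}$ to a purely numerical condition on the dimensions of various intersections, and then to exploit the upper semicontinuity of those dimensions on the flag variety. First I would observe that for \emph{any} flag $z = (F_1,\dots,F_r) \in F$ one always has the inclusion
\[
(F_i\cap W_1)\oplus(F_i\cap W_2) \subseteq F_i\cap W,
\]
since both summands lie in $F_i\cap W$ and their sum is direct because $W_1\cap W_2 = 0$. Consequently the defining condition of $Z_{W_1,W_2}$ is equivalent to the equality of dimensions
\[
\dim(F_i\cap W) = \dim(F_i\cap W_1) + \dim(F_i\cap W_2),\qquad i=1,\dots,r.
\]
This is the crucial simplification: it replaces an identity of subspaces by a condition on integers.

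Next I would establish, for each fixed subspace $U\subseteq V$ of dimension $d$ and each index $i$, that the function $z\mapsto \dim(F_i\cap U)$ is upper semicontinuous on $F$ in the Zariski topology, with each locus $\{\dim(F_i\cap U)\geq c\}$ closed and algebraic. This follows by considering the addition map $F_i\oplus U \to V$: its rank equals $\dim(F_i+U)$ and its kernel has dimension $\dim(F_i\cap U)$, whence $\dim(F_i\cap U) = k_i + d - \mathrm{rank}$. The rank drops on a closed subset cut out by the vanishing of minors, and these minors pull back algebraically from the Grassmannian factor $\mathrm{Grass}(k_i,V)$ along the projection $F\to \mathrm{Grass}(k_i,V)$. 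Hence each locus $\{\dim(F_i\cap U) = c\}$ is locally closed and algebraic.

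I would then stratify $F$ according to the tuple of integers $\big(\dim(F_i\cap W),\, \dim(F_i\cap W_1),\, \dim(F_i\cap W_2)\big)_{i=1}^{r}$. Each entry is a bounded nonnegative integer, so there are only finitely many nonempty strata, and by the previous step each is a locally closed algebraic subset of $F$. On a single stratum all three families of dimensions are constant, so the numerical condition above holds either on the entire stratum or on none of it. Therefore $Z_{W_1,W_2}$ is exactly the union of those finitely many strata on which the equalities hold, which is a finite union of locally closed algebraic subsets, as claimed.

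The only genuine content lies in the semicontinuity assertion of the second step; the rest is bookkeeping driven by the boundedness of the dimension vectors. The point to be careful about is that the relevant loci are honestly algebraic, cut out by minors rather than merely constructible, and that the always-valid inclusion of the first step legitimately lets us exchange the subspace identity for the dimension count. With those two observations in hand, the finiteness of the stratification and hence the conclusion are immediate.
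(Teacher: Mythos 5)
Your proof is correct, and it takes a genuinely different and more economical route than the paper's. The paper reduces to the case of a single Grassmannian $\mathrm{Gr}(k,V)$, fixes the dimension $k'$ of $L\cap W$ and a splitting type $(a,b)$ with $a+b=k'$, and then realizes the desired locus as the intersection of a locally closed stratum with the image of a proper relative Grassmannian over the closed image of $\mathrm{Gr}(a,W_1)\times\mathrm{Gr}(b,W_2)\to\mathrm{Gr}(k',W)$; the algebraicity comes from properness and closedness of images. You instead make the key observation that the inclusion $(F_i\cap W_1)\oplus(F_i\cap W_2)\subseteq F_i\cap W$ holds for \emph{every} flag (the sum being automatically direct since $W_1\cap W_2=0$), so that the defining condition is equivalent to the purely numerical identity $\dim(F_i\cap W)=\dim(F_i\cap W_1)+\dim(F_i\cap W_2)$; from there, upper semicontinuity of $z\mapsto\dim(F_i\cap U)$ for each fixed $U\in\{W,W_1,W_2\}$ (via degeneracy loci of the bundle map $\mathscr{S}_i\oplus(U\otimes\mathscr{O})\to V\otimes\mathscr{O}$ pulled back from the $i$-th Grassmannian factor) gives a finite stratification of $F$ by locally closed sets on each of which the condition is constant. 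Your argument avoids the properness and image-of-a-morphism machinery entirely, works directly on the flag variety without reducing to a Grassmannian, and isolates the one observation (the automatic inclusion) that turns a subspace identity into a dimension count; the paper's construction, in exchange, exhibits the strata more explicitly as incidence correspondences. Both proofs are complete and yield the same conclusion.
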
 

The proof of this Lemma is straightforward: it suffices to prove it in case $F = \mathrm{Fl}( k; V)$ is a Grassmannian. The set of $k$-subspaces of $V$ intersecting $W$ in a subspace of fixed dimension $k'$ is a locally closed subset of the Grassmannian. We now fix $k'\le k$ and also positive integers $a$ and $b$ with $a+b =k'$. We have a morphism
\[
\mathrm{Gr} (a, W_1) \times \mathrm{Gr} (b, W_2) \to \mathrm{Gr} (k', W)
\]
(direct sum of subspaces), whose image is a closed subset $G \subset \mathrm{Gr} (k', W)$. Over $\mathrm{Gr} (k', W)$ we have the tautological bundle $\mathscr{E}$ whose fiber over a point is the given subspace of dimension $k'$ of $W$. We consider the relative Grassmannian
\[
\psi : \mathrm{Gr} (k-k', V \otimes \mathscr{O} / \mathscr{E} ) \to \mathrm{Gr} (k', W) .
\]
Now $\mathrm{Gr} (k-k', V \otimes \mathscr{O} / \mathscr{E} )$ is proper and has a natural morphism $f$ to $\mathrm{Gr} (k, V)$. Its image is closed (but consists also of subspaces whose dimension of intersection with $W$ is strictly larger than $k'$ of course). In any case, $f (\psi^{-1} (G))$, intersected with the locus of subspaces $L$ of dimension $k$ in $V$ whose intersection with $W$ has dimension exactly $k'$, is exactly the locus of such subspaces $L$ such that $L\cap W$ decomposes into a direct sum $L\cap W_1$ of dimension $a$ and $L\cap W_2$ of dimension $b$. Since there are only finitely many choices for $a$ and $b$, the result follows.

\medskip

\textbf{Step 3.} 

Now we continue with the proof of Lemma \ref{lCountableFamily}. Look at the period map
\[
p_{\mathscr{S}} : B \to D 
\]
of the family $\pi : \mathscr{S} \to B$. It is holomorphic by
\cite{griffiths:periodsamerican}. (We may assume without loss of
generality that $B$ is so small that there are no monodromy phenomena,
i.e., that the local system $R^2 \pi_{\ast } \Z_{\mathscr{S}}$ is
trivialized; certainly $B$ is covered by countably many such open
subspaces).  Since the Hodge filtration varies holomorphically, the
locus where a given rational cohomology class of type $2p$ remains of
type $(p,p)$ is a complex analytic subspace of $B$ (even more is true,
see \cite{CDK}, but we do not need this). It follows that the Picard
rank and algebraic lattice $A_{\mathscr{S}_b}$ are constant (equal to
$A$) for $b$ outside a countable union of analytic subsets of $B$, say
$\{ B_i \}_{i \in \mathbb{N} }$. Let $U = B - \bigcup_i B_i$.  Look at
$p^{-1} (Z_A ) \subset B$, where $Z_A \subset D$ is the subset of the
respective period domain from Lemma \ref{lLinearAlgebra}. The
restriction of $\mathscr{S}$ to each irreducible component of $p^{-1}
(Z_A)$ which meets $U$ nontrivially fulfills the hypotheses of
Conjecture \ref{cKulikovStrong}. Hence each such component gives only one isomorphism class of surfaces. Thus the
subset of isomorphism classes of fibers $[\mathscr{S}_b]$ such that $b
\in U(\C)$ and the integral polarized Hodge structure on
$T_{\mathscr{S}_b}$ is decomposable, is countable.

\medskip

\textbf{Step 4.}

We repeat the argument of Step 3 for each of the countably many families $\mathscr{S}|_{B_i} \to B_i$. We get new analytic subsets $\{ B_{ij} \}$ in each $B_i$ in this way (countably many) and repeat the argument for the $\mathscr{S}|_{B_{ij}} \to B_{ij}$, and so forth. Each time, the dimension of the base decreases, and after finitely many steps, we reach zero-dimensional bases (countably many). This concludes the proof of Lemma \ref{lCountableFamily}, and with it, the proof of Theorem \ref{tStrongCountable}. 
\end{proof}

% \nocite{}
% \bibliographystyle{amsalpha}
% \bibliography{/Users/asher/Dropbox/bib/masterbib}

\providecommand{\bysame}{\leavevmode\hbox to3em{\hrulefill}\thinspace}

\end{document}